\newcommand{\eps}{\varepsilon}
\newtheorem{dummytheorem}{Dummy-Theorem}[section]
\newcommand{\proofendsign}{$\Box$} 
\newtheorem{definition}[dummytheorem]{Definition}
\newtheorem{lemma}[dummytheorem]{Lemma}
\newtheorem{theorem}[dummytheorem]{Theorem}
\newtheorem{corollary}[dummytheorem]{Corollary}
\newenvironment{proof}{{\noindent \bf Proof }}
 {{\hspace*{\fill}\proofendsign\par\bigskip}}
\newtheorem{remarknorm}[dummytheorem]{Remark}
\newtheorem{examplenorm}[dummytheorem]{Example}
\newcommand{\N}{\mathbb{N}}
\newcommand{\R}{\mathbb{R}}
\newcommand{\F}{\mathbb{F}}
\newcommand{\pr}{\mathbb{P}}
\newcommand{\ex}{\mathbb{E}}
\newcommand{\eins}{\mathbbm{1}}
\begin{document}


\title{On qualitative robustness of the Lotka--Nagaev estimator for the offspring mean of a supercritical Galton--Watson process}

\author{
Dominic Schuhmacher\footnote{University of G\"ottingen, Institute for Mathematical Stochastics; {\tt dschuhm1@uni-goettingen.de}}
\qquad
Anja Sturm\footnote{University of G\"ottingen, Institute for Mathematical Stochastics; {\tt asturm@math.uni-goettingen.de}}
\qquad
Henryk Zähle\footnote{Saarland University, Department of Mathematics; {\tt zaehle@math.uni-sb.de}}}
\date{}
\maketitle

\begin{abstract}
We characterize the sets of offspring laws on which the Lotka--Nagaev estimator for the mean of a supercritical Galton--Watson process is qualitatively robust. These are exactly the locally uniformly integrating sets of offspring laws,  which may be quite large. If the corresponding global property is assumed instead, we obtain uniform robustness as well. We illustrate both results with a number of concrete examples. As a by-product of the proof we obtain that the Lotka--Nagaev estimator is [locally] uniformly weakly consistent on the respective sets of offspring laws, conditionally on non-extinction.
\end{abstract}

\medskip

{\bf Keywords:} Galton--Watson process, offspring mean, Lotka--Nagaev estimator, qualitative robustness, uniform conditional weak consistency, Strassen's theorem, $\psi$-weak topology

\medskip

{\bf 2010 MSC:} 60J80, 62G05, 62G35



\newpage

\section{Introduction}\label{Introduction}

A Galton--Watson branching process $(Z_n):=(Z_n)_{n\in\N_0}$ with initial state $1$ and offspring distribution $\mu$ on $\N_0:=\{0,1,2,\ldots\}$ describes the evolution of the size of a population with initial size $1$, where each individual $i$ in generation $k$ has a random number $X_{k,i}$ of descendants drawn from $\mu$ independently of all other individuals. 
In other words,
\begin{equation}\label{Def GWP}
    Z_0\,:=\,1\qquad\mbox{and}\qquad Z_n\,:=\,\sum_{i=1}^{Z_{n-1}}X_{n-1,i}\quad \mbox{for }n\in\N.
\end{equation}
For background see, for instance, \cite{AsmussenHering1983,AthreyaNey1972}. In this article we always assume that the mean
$$
    m_\mu\,:=\,\sum_{k=1}^\infty k\,\mu[\{k\}]
$$
of the offspring distribution $\mu$ is finite. A natural estimator for the offspring mean $m_\mu$ based on observations up to time $n$ is the Lotka--Nagaev estimator \cite{Lotka1939,Nagaev1967} given by
\begin{equation}\label{def of lotka estimator for mean of gwp}
    \widehat m_n
    \,:=\,
    \left\{
    \begin{array}{cll}
        \frac{\sum_{i=1}^{Z_{n-1}}X_{n-1,i}}{Z_{n-1}}=\frac{Z_n}{Z_{n-1}} & , & Z_{n-1}>0,\\
        0 & , & Z_{n-1}=0.
    \end{array}
    \right.
\end{equation}
This estimator requires knowledge only of the last two generation sizes $Z_{n-1}$ and $Z_n$. Another popular estimator is the Harris estimator $\sum_{k=1}^nZ_k/\sum_{k=0}^{n-1}Z_k$, which is known to be the nonparametric maximum likelihood estimator for $m_\mu$ when observing all generation sizes $Z_0,\ldots,Z_n$ \cite{Feigin1977,KeidingLauritzen1978} and even when observing the entire family tree \cite{Harris1948}. However, in this article we restrict ourselves to the Lotka--Nagaev estimator. Note that from the point of view of applications it is often the case that the process cannot be observed for an extended period of time, such that the Lotka--Nagaev estimator is the simplest or indeed the only possible choice in these situations.

In the critical and subcritical cases, i.e.\ when $m_\mu\le1$, the mean cannot be estimated consistently due to the extinction of $(Z_n)$ with probability $1$. On the other hand, in the supercritical case, i.e.\ when $m_\mu>1$, the Lotka--Nagaev estimator is strongly consistent on the set of non-extinction, which can be easily shown by adapting the argument of Heyde~\cite{Heyde1970}. Asymptotic normality (assuming finite variance of the offspring law $\mu$) on the set of non-extinction was obtained by Dion~\cite{Dion1974} among others. A discussion of further statistical properties can be found in \cite{DionKeiding1978}. For a recent overview of estimation in general branching processes we refer to \cite{MitovYanev2009}.

The objective of the present article is to investigate the estimator $\widehat m_n$ for (qualitative) robustness in the supercritical case. Informally, the sequence $(\widehat m_n)$ is robust when a small change in $\mu$ results only in a small change of the law of the estimator $\widehat m_n$ uniformly in $n$. More precisely, given a set ${\cal N}$ of probability measures $\mu$ on $\N_0$ with $m_\mu<\infty$, the sequence of estimators $(\widehat m_n)$ is said to be robust on ${\cal N}$ if for every $\mu_1\in{\cal N}$ and $\varepsilon>0$ there is some $\delta>0$ such that
\begin{equation}\label{def qual rob in intro}
    \mu_2\in{\cal N},~~d(\mu_1,\mu_2)\le\delta\quad\Longrightarrow\quad \rho(\mbox{\rm law}\{\widehat m_n|\mu_1\},\mbox{\rm law}\{\widehat m_n|\mu_2\})\le\varepsilon\quad\mbox{for all }n\in\N,
\end{equation}
where $d$ is any metric on ${\cal N}$ which generates the weak topology and $\rho$ is the Prohorov metric on the set ${\cal M}_1^+$ of all probability measures on $(\R_+,{\cal B}(\R_+))$. The sequence $(\widehat m_n)$ is said to be uniformly robust on ${\cal N}$ if $\delta$ can be chosen independently of $\mu_1\in{\cal N}$. [Uniform] robustness of $(\widehat m_n)$ on ${\cal N}$ means that the set of mappings $\{{\cal N}\rightarrow{\cal M}_1^+$, $\mu\mapsto\mbox{\rm law}\{\widehat m_n|\mu\} : n\in\N\}$
is [uniformly] $(d_{\scriptsize{\rm TV}},\rho)$-equicontinuous. This definition is in line with Hampel's definition of robustness for empirical estimators in nonparametric statistical models \cite{Cuevas1988,Hampel1971}. Note, however, that our situation is {\em not} covered by Hampel's setting, because our estimator $\widehat m_n$ is not based on $n$ i.i.d.\ observations. On the other hand, our setting is covered by the more general framework recently introduced in \cite{Zaehle2014b}. For background on robust statistics, see also \cite{Hampeletal1986,HuberRonchetti2009} and the references cited therein.

We point out that we do {\em not} claim that the Lotka--Nagaev estimator is particularly robust. For a ``robustification'' of the Lotka--Nagaev estimator, see  \cite{Stoimenovaetal2004}. We are rather interested in ``how robust'' the classical Lotka--Nagaev estimator is. To some extent, the degree of robustness of an estimator can be measured by the ``size'' of the sets ${\cal N}$ on which the estimator is robust; see also \cite{Zaehle2014b}. Intuitively, the larger the sets ${\cal N}$ on which the estimator is robust, the larger is the ``degree'' of robustness. Corollary \ref{asymptotic robustness of lotka - corollary} below gives an exact specification of these sets ${\cal N}$ for the Lotka--Nagaev estimator. Similar investigations have recently been done by Cont et al.\ \cite{Contetal2010} (see also \cite{Kraetschmeretal2014}) in the context of the empirical estimation of monetary risk measures. For instance, the empirical Value at Risk at level $\alpha$ (i.e., up to the sign, the empirical upper $\alpha$-quantile) is robust on the set ${\cal N}$ of all probability measures on $(\R,{\cal B}(\R))$ with a unique $\alpha$-quantile; cf.\ Proposition 3.5 in \cite{Contetal2010}.

Our main results state that the sets ${\cal N}$ on which the sequence $(\widehat m_n)$ is robust are exactly the locally uniformly integrating sets; and if a set ${\cal N}$ is even uniformly integrating and satisfies $\inf_{\mu\in{\cal N}} m_{\mu} > 1$, then $(\widehat m_n)$ is even uniformly robust on it. Uniformly integrating for a set ${\cal N}$ means just that any set of random variables $\{Y \sim \mu \colon \mu \in {\cal N} \}$ is uniformly integrable. This property is just a tiny bit stronger than finiteness of $\sup_{\mu \in {\cal N}} m_{\mu}$; see Remark~\ref{rem ui char}. Locally uniformly integrating means that every weakly convergent subsequence in ${\cal N}$ is uniformly integrating.

In Section 2 we also provide various examples of (parametric) sets ${\cal N}$ that are [locally] uniformly integrable. We illustrate the implied robustness statements in the context of estimating a parameter (via estimating the mean) that is either slightly perturbed or belongs to a model that is slightly misspecified. In both situations [uniform] robustness yields that the distribution of the estimator is largely unaffected.


\section{Main results and discussion}\label{Main results}

For the exact formulation of our main results we have to define the Galton--Watson process as a sort of canonical process. 
More precisely, let $(Z_n):=(Z_n)_{n\in\N_0}$ be given by (\ref{Def GWP}) with $(X_{k,i}):=(X_{k,i})_{(k,i)\in\N_0\times\N}$ the coordinate process on
$$
    (\Omega,{\cal F})\,:=\,(\N_0^{\N_0\times\N},\mathfrak{P}(\N_0)^{\otimes(\N_0\times\N)})
$$
(with $\mathfrak{P}$ denoting the set of all subsets) under the product law
$$
    \pr^\mu\,:=\,\mu^{\otimes(\N_0\times\N)}.
$$
Note that $(X_{k,i})$ is a double sequence of i.i.d.\ random variables with distribution $\mu$.

Let ${\cal N}_1^1$ be the set of all probability measures $\mu$ on $\N_0$ with $m_\mu<\infty$, and $d_{\scriptsize{\rm TV}}$ the total variation distance on ${\cal N}_1^1$, i.e.
\begin{equation}\label{def tv metric}
    d_{\scriptsize{\rm TV}}(\mu_1,\mu_2)
    \,:=\, \sup_{A\in\mathfrak{P}(\N_0)}\,|\mu_1(A)-\mu_2(A)|\\
    \,=\, \frac{1}{2}\sum_{k\in\N_0}\big|\mu_1[\{k\}]-\mu_2[\{k\}]\big|.
\end{equation}
As before let ${\cal M}_1^+$ be the set of all probability measures on $(\R_+,{\cal B}(\R_+))$ and $\rho$ be the Prohorov metric on ${\cal M}_1^+$, i.e.
\begin{equation}\label{def p metric}
    \rho(\mu_1,\mu_2)\,:=\,\inf\{\varepsilon>0  :\,\mu_1[A]\le\mu_2[A^\varepsilon]+\varepsilon\mbox{ for all }A\in{\cal B}(\R_+)\} 
\end{equation}
with $A^\varepsilon:=\{x\in\R_+:\,\inf_{a\in A}|x-a|\le\varepsilon\}$. Note that $d_{\scriptsize{\rm TV}}$ coincides with the Prohorov metric on ${\cal N}_1^1$. In particular, $d_{\scriptsize{\rm TV}}$ and $\rho$ metrize the weak topologies on ${\cal N}_1^1$ and ${\cal M}_1^+$, respectively.

\begin{definition}\label{def robustness}
For ${\cal N}\subset{\cal N}_1^1$, the sequence $(\widehat m_n)$ is said to be robust on ${\cal N}$ if for every $\mu_1\in{\cal N}$ and $\varepsilon>0$ there is a $\delta>0$ such that
$$
    \mu_2\in{\cal N},\quad d_{\scriptsize{\rm TV}}(\mu_1,\mu_2)\le\delta\quad\Longrightarrow\quad \rho(\pr^{\mu_1}\circ \widehat m_n^{-1}\,,\,\pr^{\mu_2}\circ \widehat m_n^{-1})\le\varepsilon \quad\mbox{for all }n\in\N.
$$
It is said to be uniformly robust on ${\cal N}$ if $\delta$ can be chosen independently of $\mu_1\in{\cal N}$.
\end{definition}

Of course, the notion of robustness remains the same when replacing $d_{\scriptsize{\rm TV}}$ by any other metric metrizing the weak topology. The main result of this article is Theorem~\ref{asymptotic robustness of lotka}. For its formulation we need a version of Definition 3.3 in \cite{Zaehle2014b} concerning locally uniformly $\psi$-integrating sets. Here, we set $\psi(k):=k$, $k\in\N_0$. Note that choosing the identity function for $\psi$ corresponds to the notion of locally uniformly integrating sets mentioned in the introduction. In our setting this choice is equivalent to considering $\psi_1$ when $\psi_p(k):=(1+k)^p, k\in\N_0, p \geq 0$ as introduced in (17) of \cite{Zaehle2014b}. This motivates the following definition and terminology.

\begin{definition}\label{def of uniformly integrating}
A set ${\cal N}\subset{\cal N}_1^1$ is said to be locally uniformly $\psi_1$-integrating if for every $\varepsilon>0$ and $\mu_1\in{\cal N}$ there exist some $\delta>0$ and $\ell\in\N$ such that
$$
    \mu_2\in{\cal N},\quad d_{\scriptsize{\rm TV}}(\mu_1,\mu_2)\le\delta\quad\Longrightarrow\quad \sum_{k=\ell}^\infty k\,\mu_2[\{k\}]\,\le\,\varepsilon.
$$
It is said to be uniformly $\psi_1$-integrating if for every $\varepsilon>0$ there exists some $\ell\in\N$ such that
$$
    \sup_{\mu\in{\cal N}}\,\sum_{k=\ell}^\infty k\,\mu[\{k\}]\,\le\,\varepsilon.
$$
\end{definition}

\begin{remarknorm}\label{rem ui char}
Any uniformly $\psi_1$-integrating set ${\cal N}$ is also locally uniformly $\psi_1$-integrating. We have the following characterizations of the two concepts.
\begin{enumerate}
  \item[(i)] It is straightforward to verify from the definition that a set ${\cal N}\subset{\cal N}_1^1$ is locally uniformly $\psi_1$-integrating if and only if every sequence $(\mu_n)\in{\cal N}^{\N}$ that converges weakly in ${\cal N}$ is uniformly $\psi_1$-integrating.
  \item[(ii)] The de la Vall\'ee-Poussin theorem (Theorem II.T22 in \cite{Meyer1966}) implies that a set ${\cal N}\subset{\cal N}_1^1$ is uniformly $\psi_1$-integrating if and only if there exists a sequence $(a_k)\in \R_+^{\N}$ such that $a_k/k\to\infty$ as $k\to\infty$ and $\sup_{\mu\in{\cal N}}\sum_{k=0}^\infty a_k\,\mu[\{k\}]<\infty$. This implies that a uniformly $\psi_1$-integrating set ${\cal N}$ is mean bounded in the sense that $\sup_{\mu\in{\cal N}} m_\mu<\infty$. On the other hand an arbitrary set ${\cal N}$ that is ``$p$th moment bounded'' for some $p>1$ is uniformly $\psi_1$-integrating. In particular, a set ${\cal N}$ is uniformly $\psi_1$-integrating if its elements are supported by a common finite set.
{\hspace*{\fill}$\Diamond$\par\bigskip}
\end{enumerate}
\end{remarknorm}

We may now formulate  our main result.
\begin{theorem}\label{asymptotic robustness of lotka}
Let ${\cal N}\subset{\cal N}_1^1$ be such that $m_\mu>1$ for all $\mu\in{\cal N}$. Then the following assertions hold:
\begin{itemize}
    \item[(i)] The sequence $(\widehat m_n)$ is robust on ${\cal N}$ if ${\cal N}$ is locally uniformly $\psi_1$-integrating.

    \item[(ii)] The sequence $(\widehat m_n)$ is uniformly robust on ${\cal N}$ if ${\cal N}$ is uniformly $\psi_1$-integrating and $\inf_{\mu\in{\cal N}}m_\mu>1$.

    \item[(iii)] The sequence $(\widehat m_n)$ is not robust on ${\cal N}$ if the mapping ${\cal N}\ni\mu\mapsto m_\mu$ is not $(d_{\scriptsize{\rm TV}},|\cdot|)$-continuous on all of ${\cal N}$.
\end{itemize}
\end{theorem}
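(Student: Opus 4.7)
The plan is to combine three ingredients: (A) weak consistency, $\pr^\mu\circ\widehat m_n^{-1}\Rightarrow\nu_\mu$, where $\nu_\mu:=q_\mu\delta_0+(1-q_\mu)\delta_{m_\mu}$ and $q_\mu$ is the extinction probability of the Galton--Watson tree; (B) $d_{\scriptsize{\rm TV}}$-continuity of $\mu\mapsto\nu_\mu$ on ${\cal N}$; and (C) a finite-horizon coupling bound on $\rho(\pr^{\mu_1}\circ\widehat m_n^{-1},\pr^{\mu_2}\circ\widehat m_n^{-1})$. Step (A) follows from Heyde's argument \cite{Heyde1970}: on the extinction event $\widehat m_n=0$ eventually, and on non-extinction $Z_{n-1}\to\infty$ so $\widehat m_n-m_\mu=Z_{n-1}^{-1}\sum_{i=1}^{Z_{n-1}}(X_{n-1,i}-m_\mu)\to 0$ a.s.\ by the law of large numbers.

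The technical heart of the proof is to upgrade (A) to \emph{uniform} weak convergence $\sup_{\mu\in{\cal N}}\rho(\pr^\mu\circ\widehat m_n^{-1},\nu_\mu)\to 0$. I would derive this from a uniform weak LLN for the centered offspring, afforded by uniform $\psi_1$-integrability of ${\cal N}$ (Remark~\ref{rem ui char}(ii), via de la Vall\'ee-Poussin, supplies a common slightly-super-linear envelope $a_k$, whence a truncation-plus-Chebyshev argument yields a uniform rate of convergence), together with a uniform lower bound on $Z_{n-1}$ on non-extinction: $\inf_\mu m_\mu>1$ gives uniform control of extinction probabilities and of the branching martingale, so $\pr^\mu(0<Z_{n-1}<L_n)\to 0$ uniformly in $\mu\in{\cal N}$ for some sequence $L_n\to\infty$. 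Conditioning on $Z_{n-1}$ and applying the uniform LLN then gives the uniform version of (A) under the hypotheses of (ii). For (i) the same reasoning is applied on a sufficiently small $d_{\scriptsize{\rm TV}}$-neighborhood of the given $\mu_1$, which is still uniformly $\psi_1$-integrating and, by $d_{\scriptsize{\rm TV}}$-continuity of the mean on such sets, has mean infimum strictly above $1$.

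For (B), $d_{\scriptsize{\rm TV}}$-convergence together with uniform $\psi_1$-integrability forces $m_{\mu_k}\to m_{\mu_1}$, and $\mu\mapsto q_\mu$ is continuous because $q_\mu$ is the smallest fixed point in $[0,1]$ of $f_\mu(s)=\sum_ks^k\mu[\{k\}]$ and $d_{\scriptsize{\rm TV}}$-convergence implies $f_{\mu_k}\to f_\mu$ uniformly on $[0,1]$, which (since $m_{\mu_1}>1$ makes $q_{\mu_1}$ an isolated fixed point in $[0,1)$) transfers to the smallest fixed point. For the small-$n$ terms not covered by (A), step (C) applies Strassen's theorem: put the maximal coupling $\pr(X_{k,i}^1\neq X_{k,i}^2)=d_{\scriptsize{\rm TV}}(\mu_1,\mu_2)$ on each pair independently, note that the two Galton--Watson trees coincide up to time $n$ on the event $\{X_{k,i}^1=X_{k,i}^2\text{ for all }k<n,\,i\le T\}\cap\{\max_{k<n}Z_k^1\le T\}$, combine a union bound with the Markov estimate $\pr^{\mu_1}(\max_{k<n}Z_k>T)\le T^{-1}\sum_{k<n}m_{\mu_1}^k$, and optimise in $T$; this yields $\rho(\pr^{\mu_1}\circ\widehat m_n^{-1},\pr^{\mu_2}\circ\widehat m_n^{-1})\to 0$ as $d_{\scriptsize{\rm TV}}(\mu_1,\mu_2)\to 0$ for each fixed $n$. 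Splicing (A), (B) and (C) via the triangle inequality---pick $N$ large by (A) and then $\delta$ small by (B) applied to the limits and (C) applied to $n<N$---gives (i); tracking uniformity in $\mu_1$ throughout (which is exactly what (ii)'s hypotheses supply) gives (ii).

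For (iii), pick $(\mu_k)\subset{\cal N}$ with $d_{\scriptsize{\rm TV}}(\mu_1,\mu_k)\to 0$ and $|m_{\mu_k}-m_{\mu_1}|\ge c>0$. Since $m_{\mu_1}>1$ we have $1-q_{\mu_1}>0$, so $\nu_{\mu_1}$ carries mass $1-q_{\mu_1}$ at the point $m_{\mu_1}$, which lies at distance at least $\min(c,m_{\mu_1})$ from the support $\{0,m_{\mu_k}\}$ of $\nu_{\mu_k}$; hence $\rho(\nu_{\mu_1},\nu_{\mu_k})\ge\varepsilon_0:=\min(c,m_{\mu_1},1-q_{\mu_1})>0$ independently of $k$. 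Combining with the pointwise part of (A) applied separately at $\mu_1$ and at each $\mu_k$ produces, for suitable $n=n(k)$, arbitrarily $d_{\scriptsize{\rm TV}}$-close pairs with $\rho(\pr^{\mu_1}\circ\widehat m_n^{-1},\pr^{\mu_k}\circ\widehat m_n^{-1})\ge\varepsilon_0/2$, contradicting robustness at $\mu_1$. The main obstacle I anticipate is the uniform form of (A): without a Kesten--Stigum-type assumption the quantitative rate of $Z_n\to\infty$ on non-extinction is not directly controlled by $m_\mu$, so producing uniform lower bounds on $Z_{n-1}$ that mesh with the uniform LLN on the offspring is the delicate point.
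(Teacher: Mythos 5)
Your high-level architecture matches the paper's: split into a finite-horizon coupling bound (your step (C), the paper's Theorem~\ref{hampel-huber generalized - finite sample} via Lemma~\ref{regularity of joint distributions} and Strassen), a continuity-of-the-limit ingredient (your step (B), the paper's Lemma~\ref{continuity of m} and Lemma~\ref{proof of asymptotic robustness of lotka - lemma 1}), and a uniform weak consistency step (your step (A), the paper's Theorem~\ref{uniform conditional weak consistency for Lotka} fed by a uniform weak LLN, Lemma~\ref{Uniform WLLN}), then splice via the triangle inequality and run part (iii) by contradiction. The details of (B), (C) and (iii) in your proposal are correct and close to the paper's.

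The genuine gap is exactly the one you flag at the end: you assert $\pr^\mu(0<Z_{n-1}<L_n)\to0$ uniformly from ``uniform control of extinction probabilities and of the branching martingale,'' but you give no mechanism, and the martingale route does \emph{not} work here. Under the paper's hypotheses only the first moment of $\mu$ is controlled; by the Kesten--Stigum dichotomy the martingale limit $W=\lim Z_n/m_\mu^n$ may be degenerate (i.e.\ $W=0$ a.s.) even when $m_\mu>1$, and even when it is nondegenerate, $\pr^\mu(0<W<\eps)$ is not controlled uniformly in $\mu$ by a first-moment bound. So the martingale furnishes no uniform lower bound on $Z_{n-1}$ conditional on survival. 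The paper resolves this (Lemma~\ref{unifgrowth}) in a way that bypasses $W$ entirely: decompose $Z_n$ conditionally on survival into the subpopulation with infinite line of descent, which by Theorem~I.12.1 of Athreya--Ney is itself a Galton--Watson process $\widehat Z_n$ with $\widehat\mu[\{0\}]=0$ and $\widehat\mu[\{1\}]=f_\mu'(q_\mu)$. Lemma~\ref{uniformextinction} gives the uniform bound $f_\mu'(q_\mu)\le 1-p$, so $\widehat Z_n$ stochastically dominates a Bernoulli-increment process and $\pr^\mu[\widehat Z_n\le k]\le {\rm B}_{n,p}[\{0,\dots,k\}]\to 0$ uniformly; together with the Markov bound $(1-p)^n$ for $\pr^\mu[Z_n>0\mid\text{extinction}]$ this yields $\sup_\mu\pr^\mu[Z_n=k\mid Z_n>0]\to 0$. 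Without an argument of this type your step (A) does not close, and since everything downstream (uniform conditional consistency, and hence asymptotic robustness) rests on it, the proof as written is incomplete.
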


An outline of the proof is given at the end of this section. The detailed arguments are presented in Sections~\ref{Sec Auxiliary results}--\ref{proof of main results}.

\begin{remarknorm}
We note that the statement of the theorem remains the same if we consider a Galton-Watson branching process $(Z_n)$ that is started with $z_0 \in \N$ individuals instead of started with $1$ individual. The modifications that are needed in the proofs in order to show this slightly more general statement are outlined in Section~\ref{Extension to general initial states}.
{\hspace*{\fill}$\Diamond$\par\bigskip}
\end{remarknorm}

In what follows we give a number of illustrative examples.

\begin{examplenorm}\label{binary branching}
Let us consider the set ${\cal N}_{\scriptsize{\rm bin}}$ of all probability measures that are supported by the set $\{0,2\}$. Note that each element $\mu$ of ${\cal N}_{\scriptsize{\rm bin}}$ corresponds to a Galton--Watson process with binary branching. The set ${\cal N}_{\scriptsize{\rm bin}}$ is obviously uniformly $\psi_1$-integrating, such that by part (ii) of Theorem \ref{asymptotic robustness of lotka} the sequence $(\widehat m_n)$ of Lotka--Nagaev estimators is uniformly robust on ${\cal N}_{\scriptsize{\rm bin}}$.

Note that an element $\mu$ of ${\cal N}_{\scriptsize{\rm bin}}$ is uniquely determined by the probability $p:=\mu[\{2\}]$ for $2$ offspring. Also note that the total variation distance of two elements $\mu_1$ and $\mu_2$ of ${\cal N}_{\scriptsize{\rm bin}}$ equals the distance of $p_1:=\mu_1[\{2\}]$ and $p_2:=\mu_2[\{2\}]$, i.e.\ $d_{\scriptsize{\rm TV}}(\mu_1,\mu_2)=|p_1-p_2|$. Thus uniform robustness of the sequence $(\widehat m_n)$ on ${\cal N}_{\scriptsize{\rm bin}}$ means that for every $\eps>0$ there is some $\delta>0$ such that for arbitrary $n \in \N$ and $p_1,p_2 \in [0,1]$ with $|p_1-p_2|\le\delta$ the distributions of the Lotka--Nagaev estimator $\widehat m_n$ under the parameters $p_1$ and $p_2$ are within a Prohorov-distance of $\eps$ of one another. Of course, the same holds true for the distributions of the plug-in estimators $\widehat{p}^{(n)} = \widehat m_n/2$.

For applications this becomes relevant if we want to estimate the true parameter $p_1$ in the ${\cal N}_{\scriptsize{\rm bin}}$ model, but are only able to take observations from a slightly perturbed model with parameter $p_2 \approx p_1$. The above result then tells us that our estimator has ``essentially the same'' distributional properties as it would have with observations from the true model.
{\hspace*{\fill}$\Diamond$\par\bigskip}
\end{examplenorm}

\begin{examplenorm}
Suppose that we would like to estimate $p$ in the model ${\cal N}_{\scriptsize{\rm bin}}$ of the previous example, but in reality the offspring distribution lies in a larger class ${\cal N} \supset {\cal N}_{\scriptsize{\rm bin}}$, i.e.\ our model is misspecified. As a simple example suppose that ${\cal N}$ is the set of all probability measures with support $\{0,2,3\}$. Then ${\cal N}$ is of course still uniformly $\psi_1$-integrating. Note that the total variation distance between an element $\mu_1 \in {\cal N}$ with mass $q>0$ at $3$ and an element $\mu_2 \in {\cal N}_{\scriptsize{\rm bin}}$ that distributes this additional mass among $0$ and $2$ is exactly $q$.

The uniform robustness property obtained by Theorem~\ref{asymptotic robustness of lotka}(ii) tells us then essentially that for $q$ small, i.e.\ if the model is only slightly misspecified, the distribution of $\widehat{m}_n$ (and hence of $\widehat{p}^{(n)}$) is still close to the distribution we would have obtained if our model assumption had been correct.
{\hspace*{\fill}$\Diamond$\par\bigskip}
\end{examplenorm}

\begin{examplenorm}\label{Poisson branching}
The class ${\cal N}_{\text{pois}}$ of Poisson distributions $\Pi_\lambda$, $\lambda>0$, is locally uniformly $\psi_1$-integrating by Remark~\ref{rem ui char}(i). Indeed, if $(\Pi_{\lambda_n})$ is a sequence in ${\cal N}_{\text{pois}}$ such that $\Pi_{\lambda_n} \to \Pi_{\lambda}$ weakly for some $\lambda>0$, we have in particular that $\lambda_n = -\log(\Pi_{\lambda_n}[\{0\}]) \to -\log(\Pi_{\lambda}[\{0\}]) = \lambda$, i.e.\ convergence of the means. By Theorem 2.20 in \cite{VanderVaart1998} this implies that $(\Pi_{\lambda_n})$ is uniformly $\psi_1$-integrating. (Note that in the definition of asymptotic uniform integrability on page 17 in \cite{VanderVaart1998} ``$\limsup$'' can be replaced by ``$\sup$''.)

Again we can argue along similar lines as in Example~\ref{binary branching}. If we want to estimate some true $\lambda_1$, but can observe only from a perturbed model with parameter $\lambda_2 \approx \lambda_1$, the robustness still tells us that the distribution of the estimator $\widehat{\lambda}^{(n)} = \widehat{m}_n$ changes only slightly. However, the influence of the perturbation on this change may now vitally depend on $\lambda_1$ because the robustness is not uniform.
{\hspace*{\fill}$\Diamond$\par\bigskip}
\end{examplenorm}

\begin{examplenorm}\label{polynomial branching}
Consider the class ${\cal N}_{\text{poly}}$ of polynomial distributions $P_p$ with existing expectations, i.e.\ $P_p[\{k\}] = c_p (k+1)^{-p}$, where $p>2$ and $c_p$ is a normalizing constant. If $(P_{p_n})$ is a sequence in ${\cal N}_{\text{poly}}$ such that $P_{p_n} \to P_{p}$ weakly for some $p>2$, we have by $P_{p_n}[\{k\}] \to P_p[\{k\}]$ for $k = 0,1$ that $c_{p_n} \to c_p$ and $p_n \to p$ as $n \to \infty$. Writing $p_{*} = \inf_n p_n > 2$ and $p^{*} = \sup_n p_n < \infty$, we obtain
\begin{equation*}
  \sup_n \sum_{k=\ell}^{\infty} k c_{p_n} (k+1)^{-p_n} \leq \sum_{k=\ell}^{\infty} k c_{p^*} (k+1)^{-p_*} \to 0
  \quad \text{ \ as $\ell \to \infty$.}
\end{equation*}
Thus, again by Remark~\ref{rem ui char}(i), we see that ${\cal N}_{\text{poly}}$ is locally uniformly $\psi_1$-integrating.
{\hspace*{\fill}$\Diamond$\par\bigskip}
\end{examplenorm}

As a corollary of Theorem~\ref{asymptotic robustness of lotka} we may show that $(\widehat m_n)$ is robust on ${\cal N}$ if \emph{and only if} ${\cal N}$ is locally uniformly $\psi_1$-integrating. Recall that the $\psi_1$-weak topology on ${\cal N}_1^1$ is defined to be the coarsest topology for which all mappings $\mu\mapsto\int f\,d\mu$, $f\in\F^1$, are continuous, where $\F^1$ is the set of all maps $f:\N_0\to\R$ with $|f(k)|\le C_f(1+|k|)= C_f \psi_1(k)$ for all $k\in\N_0$ and some finite constant $C_f>0$; see, for instance, Section A.5 in \cite{FoellmerSchied2011}. Of course, the $\psi_1$-weak topology is finer than the weak topology. On the other hand, it was shown (in a more general setting) in Section 3.1 in \cite{Zaehle2014b} that locally uniformly $\psi_1$-integrating sets are exactly those subsets of ${\cal N}_1^1$ on which the relative weak topology and the relative $\psi_1$-weak topology coincide. 

\begin{corollary}\label{asymptotic robustness of lotka - corollary}
Let ${\cal N}\subset{\cal N}_1^1$ be such that $m_\mu>1$ for all $\mu\in{\cal N}.$ Then the sequence $(\widehat m_n)$ is robust on ${\cal N}$ if and only if ${\cal N}$ is locally uniformly $\psi_1$-integrating.
\end{corollary}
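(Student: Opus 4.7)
The ``if'' direction is already provided by Theorem~\ref{asymptotic robustness of lotka}(i), so I will focus on the converse. My plan is to contrapose and apply part~(iii) of the same theorem: if ${\cal N}$ fails to be locally uniformly $\psi_1$-integrating, then I want to exhibit a point of ${\cal N}$ at which $\mu\mapsto m_\mu$ fails to be $(d_{\scriptsize{\rm TV}},|\cdot|)$-continuous, from which non-robustness on ${\cal N}$ follows immediately.

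Toward this, I would begin by assuming ${\cal N}$ is not locally uniformly $\psi_1$-integrating. By Remark~\ref{rem ui char}(i) I can extract a sequence $(\mu_n)$ in ${\cal N}$ converging weakly (hence also in $d_{\scriptsize{\rm TV}}$, since $d_{\scriptsize{\rm TV}}$ metrizes the weak topology on ${\cal N}_1^1$) to some $\mu\in{\cal N}$ while failing to be uniformly $\psi_1$-integrating. The core of the argument is then to show $m_{\mu_n}\not\to m_\mu$. I plan to argue by contradiction: suppose $m_{\mu_n}\to m_\mu$. Since weak convergence on $\N_0$ coincides with pointwise convergence of mass functions, for each fixed $\ell\in\N$ we get $\sum_{k=0}^{\ell-1}k\,\mu_n[\{k\}]\to\sum_{k=0}^{\ell-1}k\,\mu[\{k\}]$, and subtracting from $m_{\mu_n}\to m_\mu$ yields $\sum_{k\ge\ell}k\,\mu_n[\{k\}]\to\sum_{k\ge\ell}k\,\mu[\{k\}]$ as $n\to\infty$. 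Given $\varepsilon>0$, I would choose $\ell_0$ with $\sum_{k\ge\ell_0}k\,\mu[\{k\}]<\varepsilon/2$ (possible since $m_\mu<\infty$) and then $N$ so that $\sum_{k\ge\ell_0}k\,\mu_n[\{k\}]<\varepsilon$ for all $n\ge N$. For each of the finitely many $n<N$, I can pick $\ell_n$ separately with tail less than $\varepsilon$, and $\ell:=\max\{\ell_0,\ell_1,\ldots,\ell_{N-1}\}$ would witness uniform $\psi_1$-integrability of the whole sequence, contradicting the choice of $(\mu_n)$.

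Having reached this contradiction, $m_{\mu_n}\not\to m_\mu$, so $\mu\mapsto m_\mu$ is not $(d_{\scriptsize{\rm TV}},|\cdot|)$-continuous at the point $\mu\in{\cal N}$, and Theorem~\ref{asymptotic robustness of lotka}(iii) then delivers the desired failure of robustness of $(\widehat m_n)$ on ${\cal N}$. The main conceptual work is already carried by the main theorem; the only genuinely quantitative step is the tail-truncation argument sketched above, which is a standard manipulation of sums of nonnegative terms and is not expected to pose any real difficulty.
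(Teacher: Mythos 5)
Your proof is correct, and the logical skeleton (reduce everything to Theorem~\ref{asymptotic robustness of lotka}, with the ``only if'' coming from part~(iii)) matches the paper. The one genuine difference is in how you establish that failure of locally uniform $\psi_1$-integrability forces a failure of weak continuity of $\mu\mapsto m_\mu$ on ${\cal N}$. The paper invokes the abstract topological characterization it quotes from Z\"ahle (2014) --- that ${\cal N}$ is locally uniformly $\psi_1$-integrating iff the relative weak and relative $\psi_1$-weak topologies agree on ${\cal N}$ --- and then uses the (stated, not proved) equivalence ``$\mu_n\to\mu$ $\psi_1$-weakly $\iff$ $\mu_n\to\mu$ weakly and $m_{\mu_n}\to m_\mu$''. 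You instead go through the purely sequential characterization of Remark~\ref{rem ui char}(i) and supply your own elementary tail-truncation argument showing that a weakly convergent sequence in ${\cal N}_1^1$ whose means also converge is automatically uniformly $\psi_1$-integrating. That argument is sound: for $\varepsilon>0$, pick $\ell_0$ controlling the tail of the limit, use pointwise convergence of the masses and the assumed convergence of means to control the tails of $\mu_n$ beyond $\ell_0$ for $n\ge N$, and enlarge $\ell_0$ finitely to cover $n<N$. Your version is thus more self-contained (no reliance on the $\psi_1$-weak topology machinery), at the cost of a few extra lines; the paper's is shorter because it delegates precisely this step to a cited result. Both are perfectly valid.
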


\begin{proof}
By part (i) of Theorem \ref{asymptotic robustness of lotka} we know that the sequence $(\widehat m_n)$ is robust on ${\cal N}$ if ${\cal N}$ is locally uniformly $\psi_1$-integrating.

Now assume that the sequence $(\widehat m_n)$ is robust on ${\cal N}$. By part (iii) of Theorem \ref{asymptotic robustness of lotka} it follows that the mapping ${\cal N}\ni\mu\mapsto m_\mu$ is  $(d_{\scriptsize{\rm TV}},|\cdot|)$-continuous and thus continuous with respect to the weak topology on
${\cal N}.$   Suppose that ${\cal N}$ is not locally uniformly $\psi_1$-integrating. This implies that the relative $\psi_1$-weak topology on ${\cal N}$ is (strictly) finer than the relative weak topology on ${\cal N}$. In particular, we can find some $\mu,\mu_1,\mu_2,\ldots\in{\cal N}$ such that $\mu_n\to\mu$ weakly but $\mu_n\not\to\mu$ $\psi_1$-weakly. It is easily seen that $\mu_n\to\mu$ $\psi_1$-weakly if and only if $\mu_n\to\mu$ weakly and $m_{\mu_n}\to m_\mu$. So we obtain $m_{\mu_n}\not\to m_\mu$. This contradicts the weak continuity of $\mu\mapsto m_\mu$ on ${\cal N}$.
\end{proof}

We finish this section by giving an outline of the proof of Theorem~\ref{asymptotic robustness of lotka}(i). The proof strategy for part (ii) is exactly the same and the proof of part (iii) is based on a simple contradiction argument; see Theorem~\ref{hampel-huber converse}.

As mentioned in the introduction robustness of a sequence $(\widehat{m}_n)$ on ${\cal N}$ means equicontinuity of the set of maps $\{{\cal N}\rightarrow{\cal M}_1^+$, $\mu\mapsto\pr^\mu\circ\widehat m_n^{-1}: n\in\N\}$. In Section~5 we show this equicontinuity by separately showing continuity (``finite sample robustness'') and asymptotic equicontinuity (``asymptotic robustness'') of these maps.

Finite sample robustness is shown in Theorem~\ref{hampel-huber generalized - finite sample} by a coupling argument using Strassen's theorem and the fact that close offspring distributions generate close distributions of pairs $(Z_{n-1},Z_n)$ of generation sizes for any $n$ (Lemma~\ref{regularity of joint distributions}).

Asymptotic robustness is a somewhat more involved matter. In Lemma~5.2 we first show that it is enough to prove asymptotic robustness if for each $\widehat{m}_n$ we condition on non-extinction up to time $n-1$. The required asymptotic closeness of the conditional distributions of $\widehat{m}_n$ given $Z_{n-1} > 0$, uniformly over $\mu_2$ from a $\delta$-ball of offspring distributions around each $\mu_1 \in {\cal N}$, is then proved by using the locally uniform conditional weak consistency property of $(\widehat{m}_n)$ (Theorem~\ref{uniform conditional weak consistency for Lotka}) and noting that the remaining distance between $m_{\mu_1}$ and $m_{\mu_2}$ is small (Lemma~\ref{continuity of m}).

The detailed arguments can be found in the following sections. We start with a series of general probabilistic lemmas on Galton--Watson processes in Section~\ref{Sec Auxiliary results}. In Section~\ref{Sec Uniform conditional weak consistency} we show [locally] uniform weak consistency of the Lotka--Nagaev estimator on [locally] uniformly $\psi_1$-integrating sets, conditional on non-extinction. After these preparation we carry out the proof of Theorem \ref{asymptotic robustness of lotka} in Section~\ref{proof of main results}. Finally, in Section \ref{Extension to general initial states} we summarize the modifications necessary to see that Theorem  \ref{asymptotic robustness of lotka} also holds for Galton--Watson processes with general initial states.


\section{Auxiliary lemmas about Galton--Watson processes}\label{Sec Auxiliary results}

\begin{lemma}\label{continuity of m}
(i) Let ${\cal N}\subset{\cal N}_1^1$ be a locally uniformly $\psi_1$-integrating set. Then the mapping ${\cal N}\ni\mu\mapsto m_\mu$ is $(d_{\scriptsize{\rm TV}},|\,\cdot\,|)$-continuous.

(ii) If ${\cal N}\subset{\cal N}_1^1$ is even uniformly $\psi_1$-integrating, then the mapping ${\cal N}\ni\mu\mapsto m_\mu$ is uniformly $(d_{\scriptsize{\rm TV}},|\,\cdot\,|)$-continuous.
\end{lemma}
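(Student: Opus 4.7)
The plan is to bound $|m_{\mu_1} - m_{\mu_2}|$ by a direct truncation argument, using $d_{\scriptsize{\rm TV}}$ to control the difference on a finite initial segment $\{0,1,\ldots,\ell-1\}$ and $\psi_1$-integrability to control the tails $\{k \geq \ell\}$ of each measure separately.

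Fix $\mu_1 \in {\cal N}$ and $\varepsilon > 0$. First I would invoke Definition~\ref{def of uniformly integrating} with tolerance $\varepsilon/3$ to obtain some $\delta_0 > 0$ and $\ell \in \N$ such that every $\mu_2 \in {\cal N}$ with $d_{\scriptsize{\rm TV}}(\mu_1,\mu_2) \leq \delta_0$ satisfies $\sum_{k=\ell}^\infty k\, \mu_2[\{k\}] \leq \varepsilon/3$. Applying this in particular with $\mu_2 = \mu_1$ (which is trivially in the $\delta_0$-ball around itself) yields the matching bound $\sum_{k=\ell}^\infty k\, \mu_1[\{k\}] \leq \varepsilon/3$ for free.

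Next, for any $\mu_2 \in {\cal N}$ with $d_{\scriptsize{\rm TV}}(\mu_1,\mu_2) \leq \delta$ (with $\delta \leq \delta_0$ to be fixed), I would split
$$
  |m_{\mu_1} - m_{\mu_2}|
  \,\leq\,
  \sum_{k=1}^{\ell-1} k\,\bigl|\mu_1[\{k\}] - \mu_2[\{k\}]\bigr|
  \,+\, \sum_{k=\ell}^\infty k\, \mu_1[\{k\}]
  \,+\, \sum_{k=\ell}^\infty k\, \mu_2[\{k\}].
$$
The last two terms are already bounded by $\varepsilon/3$ each. For the finite sum I would use the identity $\sum_{k \in \N_0} |\mu_1[\{k\}] - \mu_2[\{k\}]| = 2 d_{\scriptsize{\rm TV}}(\mu_1, \mu_2)$ from (\ref{def tv metric}) to bound it by $2(\ell-1)\,d_{\scriptsize{\rm TV}}(\mu_1,\mu_2) \leq 2(\ell-1)\delta$. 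Setting $\delta := \min\{\delta_0,\, \varepsilon/(6\ell)\}$ then makes the finite piece $\leq \varepsilon/3$ as well, giving $|m_{\mu_1} - m_{\mu_2}| \leq \varepsilon$. This proves (i).

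For part (ii), the only change is to appeal to uniform $\psi_1$-integrability in Step~1, which furnishes an $\ell$ that does not depend on $\mu_1$ and makes the tail bound $\sup_{\mu \in {\cal N}} \sum_{k=\ell}^\infty k\,\mu[\{k\}] \leq \varepsilon/3$ hold globally; the choice $\delta := \varepsilon/(6\ell)$ is then independent of $\mu_1$, yielding uniform continuity. I do not foresee a real obstacle here: the argument is a routine truncation and the content of the lemma is essentially that local/uniform $\psi_1$-integrability is exactly the right hypothesis for local/uniform continuity of the first-moment functional on ${\cal N}_1^1$ equipped with the total variation (equivalently, weak) topology.
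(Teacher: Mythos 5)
Your proof is correct and uses essentially the same truncation argument as the paper's: control the finite initial segment via $d_{\scriptsize{\rm TV}}$, control both tails via the (locally) uniformly $\psi_1$-integrating hypothesis applied to $\mu_1$ and $\mu_2$, and choose $\delta$ small depending on the truncation level $\ell$. The only differences are cosmetic choices of constants ($\varepsilon/3$ vs.\ the paper's $\varepsilon/4$ and $\varepsilon/2$) and where you place the cutoff index.
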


\begin{proof}
We first prove part (i). Fix $\varepsilon>0$ and $\mu_1\in{\cal N}$. Since ${\cal N}$ was assumed to be locally uniformly $\psi_1$-integrating, we can find some $\delta>0$ and $\ell_\varepsilon\in\N$ such that for every $\mu_2\in{\cal N}$ with $d_{\scriptsize{\rm TV}}(\mu_1,\mu_2)\le\delta$, we have
$\sum_{k=\ell_\varepsilon}^{\infty} k\,\mu_2[\{k\}] < \varepsilon/4.$ It follows that
\begin{eqnarray*}
    |m_{\mu_1}-m_{\mu_2}|
    & \le & \sum_{k=1}^\infty k\,\big|\mu_1[\{k\}]-\mu_2[\{k\}]\big|\\
    & \le & \ell_\varepsilon\sum_{k=1}^{\ell_\varepsilon} \big|\mu_1[\{k\}]-\mu_2[\{k\}]\big|\,+\,\sum_{k=\ell_\varepsilon+1}^{\infty}k\,\big|\mu_1[\{k\}]-\mu_2[\{k\}]\big|\\
    & \le & \ell_\varepsilon\,2\,d_{\scriptsize{\rm TV}}(\mu_1,\mu_2)\,+\,\varepsilon/2.
\end{eqnarray*}
Thus, choosing $\delta_\varepsilon:=\min\{\delta;\ell_\varepsilon^{-1}\varepsilon/4\}$ we have that $d_{\scriptsize{\rm TV}}(\mu_1,\mu_2)\le\delta_\varepsilon$ implies $|m_{\mu_1}-m_{\mu_2}|\le\varepsilon.$  This completes the proof of part (i).

Part (ii) can be shown analogously. Set (informally) $\delta:=\infty$ and note that $\ell_\varepsilon$ can be chosen independently of $\mu_1$ when ${\cal N}$ is uniformly $\psi_1$-integrating.
\end{proof}


Let us fix some more notation regarding the Galton--Watson process. We let
$$
    f_{\mu}(s)\,:=\,\sum_{k \in \N_0} s^k \mu[\{k\}], \quad 0 \leq s \leq 1,
$$
be the generating function of the offspring distribution $\mu$. We also use $f_{\mu}^{(n)}$ to denote the $n$th iterate of $f_{\mu}$, which is the generating function of $Z_n$ (recall that $Z_0=1$). By $q_{\mu}$ we denote the extinction probability of the associated Galton--Watson branching process, that is,
$$
    q_{\mu}\,:=\,\pr^\mu[Z_n=0\mbox{ for some }n\in\N].
$$
Except for some of the lemmas in the present section, we assume in this article that $m_{\mu} > 1$. Recall that $q_\mu$ is then the unique solution of $f_{\mu}(s) = s$ in $s \in [0,1)$. The generating function $f_{\mu}$ is strictly increasing and strictly convex, which implies $f_{\mu}'(q_\mu) < 1$. Furthermore we have $f_{\mu}^{(n)}(s) \nearrow q_\mu$ as $n \to \infty$ for every $s \in [0,q)$. See \cite{AthreyaNey1972}, Section~I.3 and~I.5, for this and similar basic results.

\begin{lemma}\label{uniformextinction}
(i) Let ${\cal N}\subset{\cal N}_1^1$ be a locally uniformly $\psi_1$-integrating set with $m_\mu>1$ for all $\mu\in{\cal N}$. Then for every $\mu_1\in{\cal N}$ there exist some $p>0$ and $\delta>0$ such that for all $\mu_2\in{\cal N}$ with $d_{\scriptsize{\rm TV}}(\mu_1,\mu_2)\le\delta$,
\begin{eqnarray}
\label{unifboundextinction i}
    \quad q_{\mu_2} &\leq& 1-p,\\
\label{unifbound-deriv-at-extinction i}
    f_{\mu_2}'(q_{\mu_2})&\leq& 1-p.
\end{eqnarray}

(ii) If ${\cal N}\subset{\cal N}_1^1$ is even uniformly $\psi_1$-integrating with $\inf_{\mu\in{\cal N}}m_\mu>1$, then there exists a $p>0$ such that
\begin{eqnarray}
\label{unifboundextinction ii}
\sup_{\mu\in{\cal N}} q_{\mu} &\leq& 1-p,\\
\label{unifbound-deriv-at-extinction ii}
\sup_{\mu\in{\cal N}} f_{\mu}'(q_{\mu})&\leq& 1-p.
\end{eqnarray}

\end{lemma}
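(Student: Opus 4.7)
My plan is to reduce both parts to the continuity of the generating function $f_\mu(s) = \sum_k s^k \mu[\{k\}]$ in $\mu$, combined with the locally/uniformly $\psi_1$-integrating hypothesis. The key algebraic input is that $|f_{\mu_1}(s) - f_{\mu_2}(s)| \le 2\, d_{\mathrm{TV}}(\mu_1,\mu_2)$ uniformly for $s \in [0,1]$. I also use that in the supercritical regime $q_\mu$ is the unique fixed point of $f_\mu$ in $[0,1)$ with $f_\mu'(q_\mu) < 1$ (by strict convexity of $f_\mu$, noting that $m_\mu > 1$ forces mass on some $k \ge 2$).

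For part (i), I first establish the extinction bound. Pick $p_1 > 0$ with $q_{\mu_1} < 1-2p_1$; since $f_{\mu_1}(s) < s$ on $(q_{\mu_1},1)$, the quantity $\eta := (1-2p_1) - f_{\mu_1}(1-2p_1)$ is strictly positive. Choosing $\delta_0 < \eta/2$ gives $f_{\mu_2}(1-2p_1) < 1-2p_1$ for every $\mu_2 \in {\cal N}$ with $d_{\mathrm{TV}}(\mu_1,\mu_2) \le \delta_0$, so the smallest fixed point of $f_{\mu_2}$ lies in $[0, 1-2p_1]$, proving the bound with $p=2p_1$. For the derivative bound, I would prove the stronger statement that $f_{\mu_n}'(q_{\mu_n}) \to f_{\mu_1}'(q_{\mu_1})$ for every sequence $(\mu_n) \subset {\cal N}$ with $d_{\mathrm{TV}}(\mu_1,\mu_n) \to 0$; the existence of a uniform $p, \delta$ then follows by a routine subsequence argument since $f_{\mu_1}'(q_{\mu_1}) < 1$. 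This convergence relies on (a) the continuity $q_{\mu_n} \to q_{\mu_1}$, which follows since $q_{\mu_n} \in [0, 1-2p_1]$ is bounded away from $1$ and any subsequential limit is a fixed point of $f_{\mu_1}$ in $[0,1)$, hence equals $q_{\mu_1}$; and (b) a split $\sum_k k q_{\mu_n}^{k-1} \mu_n[\{k\}] = \sum_{k<\ell} + \sum_{k \ge \ell}$ where the head converges using $d_{\mathrm{TV}}$-closeness together with $q_{\mu_n}\to q_{\mu_1}$, while the tail is bounded uniformly in $n$ by $\sum_{k \ge \ell} k\,\mu_n[\{k\}]$, which is small by the locally uniform $\psi_1$-integrating property applied on the $\delta$-ball around $\mu_1$.

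For part (ii), I proceed by contradiction with compactness. If no uniform $p > 0$ worked, there would be a sequence $\mu_n \in {\cal N}$ with $q_{\mu_n} \to 1$ or $f_{\mu_n}'(q_{\mu_n}) \to 1$. Uniform $\psi_1$-integrability yields $\sup_n m_{\mu_n} < \infty$ (Remark~2.3(ii)) and hence tightness of $(\mu_n)$, so Prohorov produces a weakly convergent subsequence $\mu_{n_k} \to \mu^*$; moreover, uniform $\psi_1$-integrability is preserved in the limit and forces $m_{\mu^*} = \lim_k m_{\mu_{n_k}} \ge \inf_{\mu \in {\cal N}} m_\mu > 1$. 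Thus $\mu^*$ is supercritical with $q_{\mu^*} < 1$ and $f_{\mu^*}'(q_{\mu^*}) < 1$. Replaying the convergence arguments of part~(i) along $\mu_{n_k} \to \mu^*$, but now with the global uniform tail bound from ${\cal N}$, yields $q_{\mu_{n_k}} \to q_{\mu^*} < 1$ and $f_{\mu_{n_k}}'(q_{\mu_{n_k}}) \to f_{\mu^*}'(q_{\mu^*}) < 1$, contradicting the behaviour of the chosen sequence.

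The main obstacle I anticipate is the derivative bound: controlling $f_\mu'(q_\mu) = \sum_k k q_\mu^{k-1} \mu[\{k\}]$ uniformly demands both a uniform separation $q_\mu \le 1-p$ (so the extinction bound must be established first and then leveraged to dominate the factor $q_\mu^{k-1}$) and a uniform tail estimate on $\sum_{k \ge \ell} k\, \mu[\{k\}]$ — exactly what the $\psi_1$-integrating hypothesis supplies. A secondary subtlety in part (ii) is that the weak limit $\mu^*$ need not lie in ${\cal N}$, but this is harmless since all I actually use is $m_{\mu^*} > 1$, which is ensured by the assumption $\inf_\mu m_\mu > 1$ together with the preservation of means under uniform integrability.
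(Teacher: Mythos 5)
Your proposal is correct but takes a genuinely different route from the paper's proof at every step. For the bound \eqref{unifboundextinction i} you compare generating functions directly via $|f_{\mu_1}(s)-f_{\mu_2}(s)|\le 2\,d_{\scriptsize{\rm TV}}(\mu_1,\mu_2)$ and show $f_{\mu_2}(\bar q)<\bar q$ at one fixed $\bar q>q_{\mu_1}$; this needs \emph{no} integrability hypothesis and is in fact the same device the paper deploys later (in Lemma~\ref{proof of asymptotic robustness of lotka - lemma 1} for the iterates $f_\mu^{(n)}(0)$). The paper instead proves a locally uniform estimate $|f'_{\mu_2}(1)-f'_{\mu_2}(s)|\le\eps$ near $s=1$ using the $\psi_1$-integrating property, combines it with continuity of $\mu\mapsto m_\mu$ to force $f'_{\mu_2}(s)>1$ near $1$, and reads off both bounds from a single setup: $q_{\mu_2}<1-\delta_2$ follows at once, and \eqref{unifbound-deriv-at-extinction i} follows from a one-shot convexity/secant estimate $f'_{\mu_2}(q_{\mu_2})\le (f_{\mu_2}(1-\delta_2)-f_{\mu_2}(0))/(1-\delta_2)$. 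Your derivative bound instead proves sequential continuity of $\mu\mapsto f'_\mu(q_\mu)$ at $\mu_1$ via $q_{\mu_n}\to q_{\mu_1}$ plus a head/tail split of $\sum_k k q_{\mu_n}^{k-1}\mu_n[\{k\}]$; this is where you actually invoke local $\psi_1$-integrability, and it works, but it is more bookkeeping than the paper's explicit secant bound. The sharpest divergence is part~(ii): the paper gets it essentially for free by replacing $m_{\mu_1}$ with $\underline m=\inf_\mu m_\mu$ and taking $\delta=\infty$, whereas you mount a separate compactness argument (tightness from uniform integrability, Prohorov, weak limit $\mu^*$ with $m_{\mu^*}>1$, then contradiction). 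Your version is correct — note that weak convergence on $\N_0$ upgrades to $d_{\scriptsize{\rm TV}}$-convergence by Scheff\'e, so you may indeed replay (i) with $\mu^*$ in the role of $\mu_1$ even though $\mu^*\notin{\cal N}$ — but it is considerably heavier than needed. In short: your argument is sound and trades the paper's uniform-over-a-ball estimates for continuity-along-sequences plus compactness; the paper's version is more quantitative and self-contained, while yours makes the extinction bound's independence from $\psi_1$-integrability explicit.
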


\begin{proof}
We first prove part (i). Let $\mu_1\in{\cal N}$. We start by showing a locally uniform continuity of $f_\mu'$ at $1$ and $\mu_1$, meaning that for all $\varepsilon>0$ there exist some $\delta_1>0$ and $\delta_2>0$ such that for all $\mu_2\in {\cal N}$ with $d_{\scriptsize{\rm TV}}(\mu_1,\mu_2)\le\delta_1$,
\begin{equation}\label{f'unifcont i}
    |f_{\mu_2}'(1)-f_{\mu_2}'(s)|\,\leq\,\varepsilon \quad \text{ for all } s  \in [1-\delta_2, 1].
\end{equation}
Indeed, by the assumption on ${\cal N}$ we can choose for fixed $\varepsilon>0$ some $\delta_1=\delta_1(\varepsilon)>0$ and $\ell=\ell(\varepsilon)\in\N$ such that $\sum_{k=\ell+1}^\infty k\,\mu_2[\{k\}]\le \varepsilon/4$ for all $\mu_2\in {\cal N}$ with $d_{\scriptsize{\rm TV}}(\mu_1,\mu_2)\le\delta_1$. Set $\delta_2=\delta_2(\varepsilon):=\frac{\varepsilon}{2\ell^2}$. Then we have for all $\mu_2\in {\cal N}$ with $d_{\scriptsize{\rm TV}}(\mu_1,\mu_2)\le\delta_1$ and all $s \in [1-\delta_2, 1]$,
\begin{eqnarray*}
    |f_{\mu_2}'(1) - f_{\mu_2}'(s)|
    & = & \Big|\sum_{k=1}^\infty k (1-s^{k-1}) \mu_2[\{k\}]\Big|\\
    & \le & \sum_{k=1}^\ell k (k-1)\,(1-s)\,\mu_2[\{k\}]\, + \,2 \sum_{k=\ell+1}^\infty k\,\mu_2[\{k\}]\\
    & \le & \ell^2 \delta_2\,+\,2\,\frac{\varepsilon}{4}\\[1mm]
    & = & \varepsilon, 
\end{eqnarray*}
where we have used that $1-s^{k-1}\leq (k-1)(1-s)$ for $s \in [0,1]$. This shows (\ref{f'unifcont i}).

Next, recall that $m_{\mu_1}>1$ and choose $\varepsilon>0$ small enough such that $2\varepsilon<m_{\mu_1}-1$. By Lemma~\ref{continuity of m} we can find some $\delta_3=\delta_3(\varepsilon)>0$ such that for all $\mu_2\in{\cal N}$ with $d_{\scriptsize{\rm TV}}(\mu_1,\mu_2)\le\delta_3$,
\begin{equation}\label{diff m mu1 m mu2}
    |m_{\mu_1}-m_{\mu_2}|\,\le\,\varepsilon.
\end{equation}
Now we use (\ref{f'unifcont i}) and (\ref{diff m mu1 m mu2}) in order to obtain some $\delta_1\in(0,\delta_3]$ and $\delta_2\in(0,\delta_3]$ such that for all $\mu_2\in{\cal N}$ with $d_{\scriptsize{\rm TV}}(\mu_1,\mu_2)\le\delta_1$ and all $s \in [1-\delta_2, 1]$,
\begin{eqnarray}
    f_{\mu_2}'(s)
     & = & f_{\mu_2}'(1)-(f_{\mu_2}'(1) - f_{\mu_2}'(s))\nonumber\\
     & = & m_{\mu_2}-(f_{\mu_2}'(1) - f_{\mu_2}'(s))\nonumber\\
     & \ge & m_{\mu_1}-2\varepsilon~>~1. \label{lower bound for f'}
\end{eqnarray}
From this we get in particular that for all $\mu_2\in{\cal N}$ with $d_{\scriptsize{\rm TV}}(\mu_1,\mu_2)\le\delta_1$,
\begin{equation*}
    f_{\mu_2}(s) \,\leq\, 1- (1-s) (m_{\mu_1}-2\varepsilon)\,<\,s  \quad \text{ for all } s  \in [1-\delta_2, 1].
\end{equation*}
Since $q_{\mu_2}<1$ and $f_{\mu_2}(q_{\mu_2})= q_{\mu_2}$, this implies that $q_{\mu_2}< 1-\delta_2$ for all $\mu_2\in{\cal N}$ with $d_{\scriptsize{\rm TV}}(\mu_1,\mu_2)\le\delta_1$, which shows (\ref{unifboundextinction i}) with $p:=\delta_2$ and $\delta:=\delta_1$. Also, using the convexity of $f_{\mu_2}$ and the fact that $f_{\mu_2}(1-\delta_2) \leq 1-\delta_2 (m_{\mu_1}-2\varepsilon)$ it is easy to see that for all $\mu_2\in{\cal N}$ with $d_{\scriptsize{\rm TV}}(\mu_1,\mu_2)\le\delta_1$
\begin{equation*}
    f_{\mu_2}'(q_{\mu_2})\,\le\,  \frac{ f_{\mu_2}(1-\delta_2)-f_{\mu_2}(0) }{1-\delta_2}
    \,\le\,\frac{1-\delta_2 (m_{\mu_1}-2\varepsilon)}{1-\delta_2}\,<\,1,
\end{equation*}
where we have bounded the left hand side by the slope of the line connecting $(0,0)$ with $(1-\delta_2, 1-\delta_2 (m_{\mu_1}-2\varepsilon))$. This shows (\ref{unifbound-deriv-at-extinction i}) with $p:=1-(1-\delta_2 (m_{\mu_1}-2\varepsilon))/(1-\delta_2)$ and $\delta:=\delta_1$, and completes the proof of part (i).

Part (ii) can be shown analogously. Set (informally) $\delta_1:=\delta_3:=\infty$, skip \eqref{diff m mu1 m mu2}, and replace $m_{\mu_1}$ by $\underline{m}:=\inf_{\mu \in {\cal N}} m_{\mu}>1$ in what follows.
\end{proof}


\begin{lemma}\label{unifgrowth}
(i) Let ${\cal N}\subset{\cal N}_1^1$ be a locally uniformly $\psi_1$-integrating set with $m_\mu>1$ for all $\mu\in{\cal N}$. Then for every $\mu_1\in{\cal N}$, $k\in\N$, $\varepsilon>0$ there exist some $\delta>0$ and $n_0 \in \N$ such that 
\begin{equation}\label{uniform conditional weak consistency for Lotka - proof - 50 - prime}
    \mu_2\in{\cal N},\quad d_{\scriptsize{\rm TV}}(\mu_1,\mu_2)\le\delta\quad\Longrightarrow\quad\pr^{\mu_2}[Z_{n}=k|Z_{n}>0]\,\le\,\varepsilon\quad\mbox{ for all }n\ge n_0.
\end{equation}

(ii) If ${\cal N}\subset{\cal N}_1^1$ is even uniformly $\psi_1$-integrating with $\inf_{\mu\in{\cal N}}m_\mu>1$, then for every $k\in\N$ and $\varepsilon>0$ there exists some $n_0 \in \N$ such that
\begin{equation}\label{uniform conditional weak consistency for Lotka - proof - 50 - pprime}
    \sup_{\mu\in{\cal N}}\,\pr^\mu[Z_{n}=k|Z_{n}>0]\,\le\,\varepsilon\quad\mbox{ for all }n\ge n_0.
\end{equation}
\end{lemma}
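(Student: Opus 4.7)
I will prove part~(i); part~(ii) is obtained by the same argument with $p$ and all subsequent constants made uniform in $\mu\in{\cal N}$ via Lemma~\ref{uniformextinction}(ii) and the uniform $\psi_1$-integrating hypothesis. Fix $\mu_1\in{\cal N}$, $k\in\N$ and $\varepsilon>0$. Lemma~\ref{uniformextinction}(i) yields $p>0$ and $\delta_1>0$ such that $q_{\mu_2}\leq 1-p$ and $f_{\mu_2}'(q_{\mu_2})\leq 1-p$ for every $\mu_2$ with $d_{\scriptsize{\rm TV}}(\mu_1,\mu_2)\leq\delta_1$; in particular $\pr^{\mu_2}[Z_n>0]=1-f_{\mu_2}^{(n)}(0)\geq 1-q_{\mu_2}\geq p$, so it suffices to bound $\pr^{\mu_2}[Z_n=k]$ by a quantity tending to $0$ geometrically in $n$, uniformly in such $\mu_2$.

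The key step is to extend $f_{\mu_2}'(q_{\mu_2})\leq 1-p$ into a ``safe interval'' $[q_{\mu_2},q_{\mu_2}+\delta'']$ of uniform width on which $f_{\mu_2}'$ stays below $1-p/2$. Using $(q_{\mu_2}+t)^{j-1}-q_{\mu_2}^{j-1}\leq(j-1)t$, valid whenever $q_{\mu_2}+t\leq 1$, I split
$$
f_{\mu_2}'(q_{\mu_2}+t)-f_{\mu_2}'(q_{\mu_2})=\sum_{j\geq 2}j\,\mu_2[\{j\}]\big((q_{\mu_2}+t)^{j-1}-q_{\mu_2}^{j-1}\big)\leq L^2 t+\sum_{j>L}j\,\mu_2[\{j\}]
$$
and invoke locally uniform $\psi_1$-integrability to pick $L\in\N$ and $\delta\in(0,\delta_1]$ such that the tail sum is at most $p/4$ whenever $d_{\scriptsize{\rm TV}}(\mu_1,\mu_2)\leq\delta$. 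Setting $\delta'':=p/(4L^2)$ then forces $f_{\mu_2}'(q_{\mu_2}+t)\leq 1-p/2$ for all such $\mu_2$ and all $t\in[0,\delta'']$.

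Define $s_{\mu_2}:=q_{\mu_2}+\delta''/2\in[\delta''/2,\,1-p/2]$, so that $f_{\mu_2}'(u)\leq f_{\mu_2}'(s_{\mu_2})\leq 1-p/2$ for every $u\in[0,s_{\mu_2}]$ by monotonicity of $f_{\mu_2}'$. Since $f_{\mu_2}$ is increasing with $f_{\mu_2}(u)\leq u$ on $[q_{\mu_2},1]$, the interval $[0,s_{\mu_2}]$ is forward-invariant under $f_{\mu_2}$, and the chain rule gives $(f_{\mu_2}^{(n)})'(\xi)=\prod_{j=0}^{n-1}f_{\mu_2}'(f_{\mu_2}^{(j)}(\xi))\leq(1-p/2)^n$ for every $\xi\in[0,s_{\mu_2}]$. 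The mean value theorem then yields $f_{\mu_2}^{(n)}(s_{\mu_2})-f_{\mu_2}^{(n)}(0)\leq(1-p/2)^n s_{\mu_2}$, and the Markov-type inequality
$$
s_{\mu_2}^k\,\pr^{\mu_2}[Z_n=k]\,\leq\,\ex^{\mu_2}\big[s_{\mu_2}^{Z_n}\eins_{\{Z_n\geq 1\}}\big]\,=\,f_{\mu_2}^{(n)}(s_{\mu_2})-f_{\mu_2}^{(n)}(0)
$$
combined with $s_{\mu_2}\geq\delta''/2$ gives $\pr^{\mu_2}[Z_n=k]\leq(\delta''/2)^{1-k}(1-p/2)^n$. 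Dividing by $\pr^{\mu_2}[Z_n>0]\geq p$ and choosing $n_0$ depending only on $\mu_1,k,\varepsilon$ produces the asserted bound.

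The main technical obstacle is really only the ``safe interval'' construction in the middle step: the locally uniformly $\psi_1$-integrating hypothesis is essential to trade off the polynomial bulk $L^2 t$ against a uniform tail of order $p/4$; once that is in place, the remaining convex-contraction and Markov estimates are routine. For part~(ii) the same recipe works with $p$, $L$, and $\delta''$ taken independent of $\mu\in{\cal N}$ by virtue of Lemma~\ref{uniformextinction}(ii) and the uniform $\psi_1$-integrability, so the resulting $n_0$ depends only on $k$ and $\varepsilon$.
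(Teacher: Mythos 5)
Your proof is correct, but it takes a genuinely different route from the paper's. The paper splits $\pr^{\mu_2}[Z_n=k\,|\,Z_n>0]$ into a contribution on the survival event $A$ and one on the extinction event $B$, then invokes the Athreya--Ney duality results (Theorems I.12.1 and I.12.3 in \cite{AthreyaNey1972}): on $A$, the count $Z_n^{(1)}$ of particles with infinite line of descent is a Galton--Watson process with $\widehat\mu_2[\{1\}]=f'_{\mu_2}(q_{\mu_2})\le 1-p$, which yields a binomial stochastic lower bound; on $B$, the process is the subcritical dual with mean $f'_{\mu_2}(q_{\mu_2})\le 1-p$, and Markov's inequality finishes. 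You instead argue purely analytically at the level of generating functions: you build a ``safe interval'' $[q_{\mu_2},q_{\mu_2}+\delta'']$ of uniform width on which $f'_{\mu_2}\le 1-p/2$ (which indeed requires a fresh appeal to the locally uniform $\psi_1$-integrating hypothesis to control the tail sum $\sum_{j>L} j\mu_2[\{j\}]$, not just Lemma~\ref{uniformextinction}), deduce a uniform contraction rate for $f_{\mu_2}^{(n)}$ on $[0,s_{\mu_2}]$ via the chain rule and forward-invariance, and then bound $\pr^{\mu_2}[Z_n=k]$ directly through the identity $\ex^{\mu_2}[s^{Z_n}\eins_{\{Z_n\ge 1\}}]=f_{\mu_2}^{(n)}(s)-f_{\mu_2}^{(n)}(0)$. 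Your version is more self-contained and elementary, as it avoids citing the two duality theorems; the paper's version is more probabilistically transparent, isolating the roles of surviving and dying lineages, and reuses the same two conclusions \eqref{unifboundextinction i}--\eqref{unifbound-deriv-at-extinction i} of Lemma~\ref{uniformextinction} without needing to revisit the $\psi_1$-integrating hypothesis inside the present proof.
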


\begin{proof}
We first prove part (i). Fix $\mu_1\in{\cal N}$, $k\in\N$, and $\varepsilon>0$. Let $p\in(0,1)$ and $\delta>0$ be as in part (i) of Lemma \ref{uniformextinction}, and $\mu_2\in{\cal N}$ with $d_{\scriptsize{\rm TV}}(\mu_1,\mu_2)\le\delta$. Let $A$ be the event that a Galton--Watson branching process survives and let $B$ be the event that it goes extinct. We have by $\pr^\mu[Z_n>0]\ge 1-q_\mu$ that
\begin{eqnarray}
\label{from alive at n to overall survival}
    \pr^{\mu_2}[ Z_n =k | Z_n>0]
    & = & \pr^{\mu_2}[ \{Z_n =k\}\cap A | Z_n>0]\,+\,\pr^{\mu_2}[\{Z_n =k\}\cap B | Z_n>0] \nonumber\\
    & \leq & \frac{\pr^{\mu_2}[ \{Z_n =k\}\cap A\cap\{Z_n>0\}]}{\pr^{\mu_2}[  Z_n>0]}\,+\,\pr^{\mu_2}[ B | Z_n>0] \nonumber\\
    & \leq & \frac{\pr^{\mu_2}[ \{Z_n =k\}\cap A]}{1-q_{\mu_2}} + \pr^{\mu_2}[ B | Z_n>0] \nonumber\\
    & = & \pr^{\mu_2}[ Z_n =k| A] + \pr^{\mu_2}[ B | Z_n>0].
\end{eqnarray}
For bounding the first term we decompose $Z_n=Z_n^{(1)}  + Z_n^{(2)}$ where $Z_n^{(1)}$ is the number of particles among $Z_n$ with infinite line of descent. We then use the fact that $Z_n^{(1)}$ under $ \pr^{\mu_2}[\,\cdot\,|A]$ has the same distribution as $Z_n$ under $\pr^{\widehat{\mu}_2}$ where $\widehat{\mu}_2$ is an offspring distribution with generating function
\begin{equation}\label{f_hat{mu}}
    f_{\widehat{\mu}_2}(s)\,=\,\frac{f_{\mu_2}((1-q_{\mu_2})s + q_{\mu_2})- q_{\mu_2}}{1-q_{\mu_2}}\,,
\end{equation}
see Theorem I.12.1 of \cite{AthreyaNey1972}. Note that $f_{\widehat{\mu}_2}$ results from taking $f_{\mu_2}$ on the square $[q_{\mu_2},1]^2$ and stretching it linearly to the unit square $[0,1]^2.$ Naturally, we have that the corresponding Galton--Watson branching process is supercritical with $\widehat{\mu}_2[\{0\}]=f_{\widehat{\mu}_2}(0)=0$ and so also $q_{\widehat{\mu}_2}=0$. By (\ref{unifbound-deriv-at-extinction i}) of Lemma \ref{uniformextinction} and the choice of $p$, 
\begin{equation}\label{hatmuunif}
    \widehat{\mu}_2[\{1\}]\,=\,f_{\widehat{\mu}_2}'(0)\,=\,f_{\mu_2}'(q_{\mu_2})\,\leq\,1-p.
\end{equation}
Under $\pr^{\widehat{\mu}_2}$, the process $Z_n$ is a.s.\ increasing in $n$. The probability that it increases by a positive quantity is at least $1-f_{\mu_2}'(q_{\mu_2})\geq p$. Thus, if ${\rm B}_{n,p}$ denotes the binomial distribution with parameters $n$ and $p$ we have
\begin{eqnarray}\label{binomial bound}
    \pr^{\mu_2}[ Z_n =k| A]
    & \leq & \pr^{\mu_2}[ Z_n \leq k | A]  \nonumber\\
    &\leq & \pr^{\mu_2}[ Z_n^{(1)} \leq k | A]  \nonumber\\
    &=& \pr^{\widehat{\mu}_2}[ Z_n \leq k ] \nonumber \\
    & \leq & {\rm B}_{n,p}[\{0,\ldots,k\}] \nonumber\\
    & \le & \varepsilon/2
\end{eqnarray}
for all $n\ge n_1$ for some sufficiently large $n_1\in\N$.

It remains to bound the probability of extinction given that $Z_n>0$. Here, we rewrite
\begin{eqnarray*}
    \pr^{\mu_2}[ B | Z_n>0]\,=\,\frac{\pr^{\mu_2}[ Z_n>0| B] \cdot \pr^{\mu_2}[B]}{\pr^{\mu_2}[ Z_n>0]}\,\leq\,\pr^{\mu_2}[ Z_n>0| B] \, \frac{q_{\mu_2}}{1-q_{\mu_2}}\,.
\end{eqnarray*}
Due to (\ref{unifboundextinction i}) of Lemma  \ref{uniformextinction} it then remains to bound $\pr^{\mu_2}[ Z_n>0| B]$ uniformly. Here, we use the fact that $Z_n$ is under $\pr^{\mu_2}[\,\cdot\,| B]$ a subcritical Galton--Watson branching process with offspring distribution $\mu_2^*$ described via its generating function
\begin{equation*}
    f_{\mu_2^*}(s) = \frac{1}{q_{\mu_2}} f_{\mu_2}(s q_{\mu_2}),
\end{equation*}
see Theorem I.12.3 of \cite{AthreyaNey1972}. Therefore, we have $m_{\mu_2^*} = f_{\mu_2^*}'(1) = f_{\mu_2}'(q_{\mu_2}) \leq 1-p$ 
by (\ref{unifbound-deriv-at-extinction i}) of Lemma \ref{uniformextinction} and the choice of $p$. Thus, by Markov's inequality
\begin{equation*}
    \pr^{\mu_2}[ Z_n>0| B]\,=\,\pr^{\mu_2^*}[ Z_n>0] \,=\,\pr^{\mu_2^*}[ Z_n\geq 1] \,\leq\,  \ex^{\mu_2^*}[Z_n] \, =\, m_{\mu_2^*}^n \,\leq\, (1-p)^n\,\le\,\varepsilon/2
\end{equation*}
for all $n\ge n_0$ for some sufficiently large $n_0\ge n_1$. This completes the proof of part (i).

Part (ii) can be shown analogously, using (\ref{unifboundextinction ii})--(\ref{unifbound-deriv-at-extinction ii}) instead of (\ref{unifboundextinction i})--(\ref{unifbound-deriv-at-extinction i}).
\end{proof}


\begin{lemma}\label{proof of asymptotic robustness of lotka - lemma 1}
(i) For every $\mu_1\in{\cal N}_1^1$ with $m_{\mu_1} > 1$ and every $\varepsilon>0$ we can find a $\delta>0$ such that
\begin{eqnarray}
    \lefteqn{\mu_2\in{\cal N}_1^1, \quad d_{\mbox{\scriptsize{\rm TV}}}(\mu_1,\mu_2)\le\delta}\nonumber\\
    & & \Longrightarrow\quad |\pr^{\mu_1}[Z_{n}=0]-\pr^{\mu_2}[Z_{n}=0]|\,\le\,\varepsilon \quad\mbox{for all }n \in \N.\label{proof of asymptotic robustness of lotka - 30}
\end{eqnarray}

(ii) Let ${\cal N}\subset{\cal N}_1^1$ be a uniformly $\psi_1$-integrating set with $\inf_{\mu\in{\cal N}}m_\mu>1$. Then for every $\varepsilon>0$ we can find a $\delta>0$ such that
\begin{eqnarray}
    \lefteqn{\mu_1,\mu_2\in{\cal N}, \quad d_{\mbox{\scriptsize{\rm TV}}}(\mu_1,\mu_2)\le\delta}\nonumber\\
    & & \Longrightarrow\quad |\pr^{\mu_1}[Z_{n}=0]-\pr^{\mu_2}[Z_{n}=0]|\,\le\,\varepsilon \quad\mbox{for all }n \in \N.\label{proof of asymptotic robustness of lotka - 35}
\end{eqnarray}
\end{lemma}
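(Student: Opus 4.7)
The central identity is $\pr^\mu[Z_n=0]=f_\mu^{(n)}(0)$, reducing the problem to controlling generating-function iterates at $0$ uniformly in $n$. The basic elementary estimate is
$\sup_{s\in[0,1]}|f_{\mu_1}(s)-f_{\mu_2}(s)| \le 2d_{\scriptsize{\rm TV}}(\mu_1,\mu_2)$,
so each iteration step introduces an additive error of at most $2d_{\scriptsize{\rm TV}}(\mu_1,\mu_2)$; the accumulated error will remain bounded in $n$ provided the iterates lie in a region where $f_{\mu_1}$ acts as a strict contraction.

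\textbf{Part (i).} Fix $\mu_1$ with $m_{\mu_1}>1$. Since $\mu_1$ must place mass outside $\{0,1\}$, $f_{\mu_1}'$ is strictly increasing with $f_{\mu_1}'(q_{\mu_1})<1<f_{\mu_1}'(1)=m_{\mu_1}$, so I pick $q^* \in \bigl(q_{\mu_1},(f_{\mu_1}')^{-1}(1)\bigr)$ and set $L:=f_{\mu_1}'(q^*)<1$. The first substep is to establish $d_{\scriptsize{\rm TV}}$-continuity of $\mu\mapsto q_\mu$ at $\mu_1$: given $\mu_n\to\mu_1$ in $d_{\scriptsize{\rm TV}}$, Fatou's lemma yields $\liminf m_{\mu_n}\ge m_{\mu_1}$ (so $\mu_n$ is eventually supercritical), $f_{\mu_n}\to f_{\mu_1}$ uniformly on $[0,1]$, and the strict sign behaviour of $s\mapsto s-f_{\mu_1}(s)$ (negative on $[0,q_{\mu_1})$, positive on $(q_{\mu_1},1)$, with derivative $1-f_{\mu_1}'(s)>0$ on $[0,q_{\mu_1}]$) forces the smallest root $q_{\mu_n}$ of $s\mapsto s-f_{\mu_n}(s)$ into every prescribed neighbourhood of $q_{\mu_1}$. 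Hence there is $\delta_1>0$ with $q_{\mu_2}\le q^*$ whenever $d_{\scriptsize{\rm TV}}(\mu_1,\mu_2)\le\delta_1$; for such $\mu_2$ every iterate satisfies $f_{\mu_1}^{(n)}(0)\le q_{\mu_1}$ and $f_{\mu_2}^{(n)}(0)\le q_{\mu_2}\le q^*$. Combining the mean value theorem for $f_{\mu_1}$ on $[0,q^*]$ with the generating-function estimate yields the recursion
\[
  a_{n+1} \le L\,a_n + 2d_{\scriptsize{\rm TV}}(\mu_1,\mu_2), \qquad a_n:=|f_{\mu_1}^{(n)}(0)-f_{\mu_2}^{(n)}(0)|,\ a_0=0,
\]
which sums to $a_n\le 2d_{\scriptsize{\rm TV}}(\mu_1,\mu_2)/(1-L)$ uniformly in $n$. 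Setting $\delta:=\min\bigl(\delta_1,\varepsilon(1-L)/2\bigr)$ closes part~(i).

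\textbf{Part (ii).} Lemma~\ref{uniformextinction}(ii) supplies a single $p>0$ with $\sup_{\mu\in{\cal N}} q_\mu \le 1-p$ and $\sup_{\mu\in{\cal N}} f_\mu'(q_\mu)\le 1-p$, and Remark~\ref{rem ui char}(ii) gives a uniform mean bound $M:=\sup_{\mu\in{\cal N}} m_\mu<\infty$. From $q_\mu-f_\mu^{(n+1)}(0)=f_\mu'(\xi)(q_\mu-f_\mu^{(n)}(0))$ with $\xi\le q_\mu$ and monotonicity of $f_\mu'$, one obtains the uniform geometric rate $q_\mu-f_\mu^{(n)}(0)\le(1-p)^n$. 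A chord argument at the two fixed points (assuming WLOG $q_{\mu_1}\le q_{\mu_2}$, using $f_{\mu_2}(q_{\mu_1})-f_{\mu_1}(q_{\mu_1})\le 2d_{\scriptsize{\rm TV}}$ together with $f_{\mu_2}'(\zeta)\le f_{\mu_2}'(q_{\mu_2})\le 1-p$ for $\zeta\in[q_{\mu_1},q_{\mu_2}]$) yields the uniform modulus $|q_{\mu_1}-q_{\mu_2}|\le 2d_{\scriptsize{\rm TV}}(\mu_1,\mu_2)/p$. Fix $N$ with $(1-p)^N\le\varepsilon/4$. For $n\ge N$ the decomposition
\[
  |f_{\mu_1}^{(n)}(0)-f_{\mu_2}^{(n)}(0)| \le (q_{\mu_1}-f_{\mu_1}^{(n)}(0)) + |q_{\mu_1}-q_{\mu_2}| + (q_{\mu_2}-f_{\mu_2}^{(n)}(0))
\]
does the job, while for $n<N$ the crude recursion $a_{n+1}\le M a_n+2d_{\scriptsize{\rm TV}}(\mu_1,\mu_2)$ iterates to $a_n\le 2d_{\scriptsize{\rm TV}}(\mu_1,\mu_2)\,N M^{N-1}$; a single $\delta$ depending only on $\varepsilon,p,M,N$ suffices.

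\textbf{Main obstacle.} The principal delicacy is in part~(i): without any local uniform $\psi_1$-integrability, Lemma~\ref{uniformextinction}(i) cannot be invoked to produce the contraction factor, and one must instead establish the $d_{\scriptsize{\rm TV}}$-continuity of $\mu\mapsto q_\mu$ at the fixed supercritical $\mu_1$ by hand, using only the uniform convergence of generating functions on $[0,1]$ and lower semicontinuity of the mean. Once this continuity traps $q_{\mu_2}$ inside $[0,q^*]$ for small $d_{\scriptsize{\rm TV}}$, the geometric-series argument is routine.
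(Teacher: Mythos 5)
Your proof is correct. Part (i) follows essentially the same route as the paper: pick a point $\bar{q}>q_{\mu_1}$ with $f_{\mu_1}'(\bar{q})<1$, trap $q_{\mu_2}\le\bar{q}$, then iterate the Lipschitz contraction and sum the geometric series. The paper's trapping step is more direct than yours: rather than first proving full $d_{\scriptsize{\rm TV}}$-continuity of $\mu\mapsto q_\mu$ via Fatou and uniform convergence of generating functions, it simply builds the requirement $d_{\scriptsize{\rm TV}}(\mu_1,\mu_2)\le(\bar q-f_{\mu_1}(\bar q))/2$ into the choice of $\delta$, which instantly yields $f_{\mu_2}(\bar q)<\bar q$ and hence $q_{\mu_2}<\bar q$; your longer continuity detour is correct but unnecessary. (Also, the sharp constant in $\sup_{s\in[0,1]}|f_{\mu_1}(s)-f_{\mu_2}(s)|$ is $d_{\scriptsize{\rm TV}}(\mu_1,\mu_2)$, not $2d_{\scriptsize{\rm TV}}$; your factor $2$ is harmless.) In part (ii) you take a genuinely different path: a two-regime split with a chord bound $|q_{\mu_1}-q_{\mu_2}|\le 2d_{\scriptsize{\rm TV}}/p$ plus geometric approach of the iterates to the fixed points for large $n$, and a crude $m_\mu$-based Lipschitz iteration for the finitely many small $n$, which forces you to invoke the uniform mean bound $\sup_\mu m_\mu<\infty$ from Remark~\ref{rem ui char}(ii). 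The paper avoids all of that by observing that with $q^*:=\max(q_{\mu_1},q_{\mu_2})$ at least one of the two maps $f_{\mu_i}$ is Lipschitz on $[0,q^*]$ with constant $f_{\mu_i}'(q^*)=f_{\mu_i}'(q_{\mu_i})\le\gamma^*<1$, so a single uniform contraction holds for all $n$ and the geometric-series argument from part (i) carries over verbatim, with no case split and no mean bound. Your argument is sound but heavier than needed; you could have recovered the paper's cleaner version by applying the Lipschitz bound through whichever of $f_{\mu_1},f_{\mu_2}$ has the larger extinction probability.
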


\begin{proof}
First note that for any $\mu_1,\mu_2 \in \mathcal{N}_1^1$, we have
\begin{eqnarray}
    \lefteqn{|f_{\mu_1}(s) - f_{\mu_2}(s)|}\nonumber\\
    & = & \Bigl| \sum_{k\in\N_0} s^k \bigl( \mu_1[\{k\}] - \mu_2[\{k\}] \bigr) \Bigr|\nonumber\\
    & \leq & \max \biggl\{ \sum_{\substack{k\in\N_0\\ \mu_1(k) > \mu_2(k)}} s^k \bigl( \mu_1(k) - \mu_2(k) \bigr), \sum_{\substack{k\in\N_0\\ \mu_1(k) < \mu_2(k)}} s^k \bigl( \mu_2(k) - \mu_1(k) \bigr) \biggr\} \nonumber\\
    & \leq & d_{\scriptsize{\rm TV}}(\mu_1,\mu_2)\label{gf induction 1}
\end{eqnarray}
by the fact that
\begin{equation*}
  \sum_{\substack{k\in\N_0\\ \mu_1(k) > \mu_2(k)}} \bigl( \mu_1(k) - \mu_2(k) \bigr)
  \,=\sum_{\substack{k\in\N_0\\ \mu_1(k) < \mu_2(k)}} \bigl( \mu_2(k) - \mu_1(k) \bigr)
  \,=\,d_{\scriptsize{\rm TV}}(\mu_1,\mu_2).
\end{equation*}

We now show part (i). Let $\varepsilon > 0$ and $\mu_1 \in \mathcal{N}_1^1$ with $m_{\mu_1} > 1$.
Since $f_{\mu_1}'(q_{\mu_1}) < 1$ and $f'_{\mu_1}$ is continuous, we may choose $\bar{q} > q_{\mu_1}$ such that $\bar{\gamma} := (f_{\mu_1})'(\bar{q}) < 1$. Set
\begin{equation}\label{def delta}
    \delta\, := \, \min \bigl\{(\bar{q} - f_{\mu_1}(\bar{q}))/2\,;\, (1-\bar{\gamma}) \varepsilon\bigr\} >\,0.
\end{equation}
Letting $\mu_2 \in \mathcal{N}_1^1$ with $d_{\mbox{\scriptsize{\rm TV}}}(\mu_1,\mu_2) \leq \delta$, we obtain by (\ref{gf induction 1}), (\ref{def delta}) and $f_{\mu_1}(\bar{q})<\bar{q}$ that
$$
    f_{\mu_2}(\bar{q})\,\leq \,f_{\mu_1}(\bar{q}) + |f_{\mu_2}(\bar{q})-f_{\mu_1}(\bar{q})| \,\le\,f_{\mu_1}(\bar{q}) + \frac{\bar{q} - f_{\mu_1}(\bar{q})}{2} \,<\, \bar{q}.
$$
Since $f_{\mu_2}(s)<s$ holds if and only if $s>q_{\mu_2}$, we conclude $q_{\mu_2}< \bar{q}$. Note that $0 \leq f^{(1)}_{\mu_i}(0) \leq f^{(2)}_{\mu_i}(0) \leq \cdots \leq q_{\mu_i} \le \bar{q}$, $i=1,2$. Furthermore, since $f_{\mu_1}$ is convex, it is Lipschitz continuous on $[0,\bar{q}]$ with constant $\bar{\gamma} < 1$.
Therefore we have for $n \geq 2$
\begin{eqnarray}
  \lefteqn{|f^{(n)}_{\mu_1}(0) - f^{(n)}_{\mu_2}(0)|}\nonumber\\
   & \leq & \bigl| f_{\mu_1}(f^{(n-1)}_{\mu_1}(0)) - f_{\mu_1}(f^{(n-1)}_{\mu_2}(0)) \bigr|
         \,+\, \bigl| f_{\mu_1}(f^{(n-1)}_{\mu_2}(0)) - f_{\mu_2}(f^{(n-1)}_{\mu_2}(0)) \bigr|\nonumber\\
   & \leq & \bar{\gamma}\, \bigl| f^{(n-1)}_{\mu_1}(0) - f^{(n-1)}_{\mu_2}(0) \bigr|\, +\, d_{\scriptsize{\rm TV}}(\mu_1,\mu_2).\label{gf induction 2}
\end{eqnarray}
For the case $n=1$ we obtain by \eqref{gf induction 1} that
\begin{equation*}
|f^{(1)}_{\mu_1}(0) - f^{(1)}_{\mu_2}(0)|= |f_{\mu_1}(0) - f_{\mu_2}(0)| \leq d_{\scriptsize{\rm TV}}(\mu_1,\mu_2).
\end{equation*}
By induction we obtain from this and inequality~\eqref{gf induction 2} that
\begin{equation}
\label{gf result}
    |\pr^{\mu_1}[Z_{n}=0]-\pr^{\mu_2}[Z_{n}=0]|\,=\,|f^{(n)}_{\mu_1}(0) - f^{(n)}_{\mu_2}(0)|\,
   \leq \, \Big( \sum_{k=0}^n \bar{\gamma}^k \Big) d_{\scriptsize{\rm TV}}(\mu_1,\mu_2) \, \leq \, \varepsilon
\end{equation}
for all $n \in \N$. This completes the proof of part (i).

Part (ii) can be shown in a similar way. Set $\delta := (1-\gamma^{*})\eps$, where $\gamma^{*} := \sup_{\mu \in {\cal N}} f'_{\mu}(q_{\mu}) < 1$ by Lemma~\ref{uniformextinction}(ii). Let $\mu_1,\mu_2 \in \mathcal{N}$ with $d_{\mbox{\scriptsize{\rm TV}}}(\mu_1,\mu_2) \leq \delta$ and set $q^{*} := \max(q_{\mu_1},q_{\mu_2})$. By convexity the function $f_{\mu_i}$ is Lipschitz continuous on $[0,q^{*}]$ with constant $f'_{\mu_i}(q^{*})$ for $i = 1,2$. Hence using that $f^{(n)}_{\mu_i}(0) \leq q_{\mu_i} \leq q^*$  inequality~\eqref{gf induction 2} can be replaced by
\begin{equation}
  |f^{(n)}_{\mu_1}(0) - f^{(n)}_{\mu_2}(0)|
   \leq f'_{\mu_i}(q^{*}) \, \bigl| f^{(n-1)}_{\mu_1}(0) - f^{(n-1)}_{\mu_2}(0) \bigr|\, +\, d_{\scriptsize{\rm TV}}(\mu_1,\mu_2) \quad \text{for $i = 1,2$}.
\end{equation}
Since $\min_{i \in \{1,2\}} f'_{\mu_i}(q^{*}) \leq \gamma^{*}$, we obtain that inequality \eqref{gf result} holds for all $n \in \N$ with $\bar{\gamma}$ replaced by $\gamma^{*}$.
\end{proof}


Now, let ${\cal N}_1^{1,n}$ be the set of all probability measures on $\N_0^n$ with marginal distributions in ${\cal N}_1^1$ and $d_{\scriptsize{\rm TV}}^{(n)}$ the total variation distance on ${\cal N}_1^{1,n}$. The following lemma shows in particular that the mapping ${\cal N}_1^1\to{\cal N}_1^{1,n}$, $\mu\mapsto \pr^\mu\circ(Z_1,\ldots,Z_n)^{-1}$ is $(d_{\scriptsize{\rm TV}},d_{\scriptsize{\rm TV}}^{(n)})$-continuous.

\begin{lemma}\label{regularity of joint distributions}
For every $\mu_1,\mu_2\in{\cal N}_1^1$ and $n\in\N$ we have
\begin{equation}\label{regularity of joint distributions - eq}
    d_{\scriptsize{\rm TV}}^{(n)}\big(\pr^{\mu_1}\circ(Z_1,\ldots,Z_n)^{-1},\pr^{\mu_2}\circ(Z_1,\ldots,Z_n)^{-1}\big)\,\le\,C_n(\mu_1,\mu_2) \,d_{\scriptsize{\rm TV}}(\mu_1,\mu_2),
\end{equation}
where $C_n(\mu_1,\mu_2) := \min\{\sum_{i=1}^nm_{\mu_1}^{i-1}, \sum_{i=1}^nm_{\mu_2}^{i-1}\}.$
\end{lemma}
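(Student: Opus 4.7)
My plan is to prove the bound by a direct coupling argument using a single, product coupling of the two offspring-variable families. The coupling inequality $d_{\scriptsize{\rm TV}}(\mbox{law}(W),\mbox{law}(W'))\le \pr[W\ne W']$ then reduces everything to estimating the probability that the two jointly constructed processes differ at some time $\le n$.

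First, let $d:=d_{\scriptsize{\rm TV}}(\mu_1,\mu_2)$. For each $(k,i)\in\N_0\times\N$ choose an optimal (maximal) coupling $(X^{(1)}_{k,i},X^{(2)}_{k,i})$ of $\mu_1$ and $\mu_2$, so that $X^{(j)}_{k,i}\sim\mu_j$ and $\pr[X^{(1)}_{k,i}\ne X^{(2)}_{k,i}]=d$; make the pairs independent across $(k,i)$. On the common space, build two Galton--Watson processes $(Z^{(1)}_n)$ and $(Z^{(2)}_n)$ from these offspring families exactly as in \eqref{Def GWP}. Then $\pr^{\mu_j}\circ(Z_1,\ldots,Z_n)^{-1}=\mbox{law}(Z^{(j)}_1,\ldots,Z^{(j)}_n)$ by construction, and by the coupling inequality
$$
d_{\scriptsize{\rm TV}}^{(n)}\bigl(\pr^{\mu_1}\circ(Z_1,\ldots,Z_n)^{-1},\,\pr^{\mu_2}\circ(Z_1,\ldots,Z_n)^{-1}\bigr)\,\le\,\pr[\tau\le n],
$$
where $\tau:=\min\{k\ge 1 : Z^{(1)}_k\ne Z^{(2)}_k\}$ is the first disagreement time.

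Next, decompose $\pr[\tau\le n]=\sum_{k=1}^n\pr[\tau=k]$ and analyse each term. On $\{\tau=k\}$ the processes agree up to time $k-1$, so $Z^{(1)}_{k-1}=Z^{(2)}_{k-1}$, and disagreement at time $k$ forces $\sum_{i=1}^{Z^{(1)}_{k-1}}X^{(1)}_{k-1,i}\ne\sum_{i=1}^{Z^{(1)}_{k-1}}X^{(2)}_{k-1,i}$, hence at least one index $i\le Z^{(1)}_{k-1}$ with $D_{k-1,i}:=\eins\{X^{(1)}_{k-1,i}\ne X^{(2)}_{k-1,i}\}=1$. A union bound gives
$$
\pr[\tau=k]\,\le\,\E\Big[\sum_{i=1}^{Z^{(1)}_{k-1}}D_{k-1,i}\Big].
$$
The key observation is that $Z^{(1)}_{k-1}$ is measurable with respect to $\sigma(X^{(1)}_{j,i}:j\le k-2)$ while the $D_{k-1,i}$'s are iid Bernoulli$(d)$ random variables measurable with respect to $\sigma(X^{(\cdot)}_{k-1,\cdot})$, and these $\sigma$-algebras are independent. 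By Wald's identity the right-hand side equals $\E[Z^{(1)}_{k-1}]\cdot d=m_{\mu_1}^{k-1}\,d$.

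Summing over $k=1,\ldots,n$ yields $\pr[\tau\le n]\le\bigl(\sum_{i=1}^n m_{\mu_1}^{i-1}\bigr)d$. Repeating the same argument with the roles of $\mu_1$ and $\mu_2$ interchanged (bounding by $Z^{(2)}_{k-1}$ instead) gives the corresponding estimate with $m_{\mu_2}$, so taking the minimum of the two bounds produces the factor $C_n(\mu_1,\mu_2)$ and establishes \eqref{regularity of joint distributions - eq}. The only subtlety worth stressing is the independence structure that makes Wald applicable; the rest is a straightforward bookkeeping of the first-disagreement decomposition.
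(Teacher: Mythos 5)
Your proof is correct and it takes a genuinely different route from the paper's. The paper proceeds by a direct computation: it writes the joint law of $(Z_1,\ldots,Z_n)$ as a product of one-step transition probabilities $\mu_i^{*k_{j-1}}[\{k_j\}]$ via the Markov property, applies the telescoping inequality $\bigl|\prod x_j - \prod y_j\bigr|\le\sum_i |x_i-y_i|\prod_{j<i}y_j\prod_{\ell>i}x_\ell$, uses the subadditivity bound $d_{\scriptsize{\rm TV}}(\mu_1^{*k},\mu_2^{*k})\le k\,d_{\scriptsize{\rm TV}}(\mu_1,\mu_2)$, and then iterates the resulting nested sums to produce the geometric factors $m_{\mu_2}^{i-1}$. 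Your argument instead builds a maximal coupling of the two offspring arrays, bounds the total variation by $\pr[\tau\le n]$ with $\tau$ the first disagreement time, and then uses a union bound together with the independence of the generation-$(k-1)$ discrepancy indicators from $Z^{(1)}_{k-1}$ (the ``Wald'' step, which here is really just conditioning on the independent size $Z^{(1)}_{k-1}$). Both arguments deliver exactly the same constant $C_n(\mu_1,\mu_2)$, and the symmetry in $\mu_1,\mu_2$ is obtained in the same way in both (redo the argument with the roles swapped). The coupling route is shorter and more transparent probabilistically, and it cleanly isolates the one place where the offspring-law discrepancy enters; the paper's analytic expansion is more self-contained (no appeal to maximal couplings or Wald) and perhaps easier to adapt when one wants to track finer information than a simple TV bound. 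One small point of care in your version, which you handle correctly: the union bound over a random index set $\{1,\ldots,Z^{(1)}_{k-1}\}$ is justified because $\eins\{\exists\, i\le N: D_i=1\}\le\sum_{i=1}^N D_i$ holds pathwise, and the subsequent factorization $\E\bigl[\sum_{i=1}^{Z^{(1)}_{k-1}} D_{k-1,i}\bigr]=\E[Z^{(1)}_{k-1}]\,d$ uses only the independence of $Z^{(1)}_{k-1}$ (built from generations $\le k-2$) from the discrepancy indicators in generation $k-1$, not the full stopping-time form of Wald's identity.
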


\begin{proof}
Let $\mu_1,\mu_2\in{\cal N}^1_1$, $n\in\N$, and $(k_1,\ldots,k_n)\in\N_0^n$.
By the Markov property we have
\begin{eqnarray}
    \lefteqn{\pr^{\mu_i}[(Z_1,\ldots,Z_n)=(k_1,\ldots,k_n)]}\nonumber\\[1mm]
    & = & \pr^{\mu_i}[Z_1=k_1]\cdot\pr^{\mu_i}[Z_2=k_2|Z_1=k_1]\cdots\pr^{\mu_i}[Z_n=k_n|Z_{n-1}=k_{n-1}]\nonumber\\
    & = & \prod_{j=1}^n\mu_i^{*k_{j-1}}[\{k_{j}\}]\label{regularity of joint distributions - proof - 1}
\end{eqnarray}
for $i=1,2$, where we set $k_0:=1.$ Here $\mu_i^{*k}$ denotes the $k$th convolution of the measure $\mu_i$ and we set $\mu_i^{*1}:=\mu_i$.
Note furthermore that for $x_j, y_j \geq 0$,
\begin{eqnarray}
   \Big|\prod_{j=1}^nx_j-\prod_{j=1}^ny_j\Big|
   & = & \Big| \sum_{i=1}^n \Big[ \Big( \prod_{j=1}^{i-1} y_j \Bigr) x_i
   \Big( \prod_{\ell=i+1}^{n}x_\ell \Big) - \Big( \prod_{j=1}^{i-1} y_j \Big) y_i
   \Big( \prod_{\ell=i+1}^{n}x_\ell \Big) \Big] \Big|\nonumber\\
   & \le & \sum_{i=1}^n|x_i-y_i|\prod_{j=1}^{i-1}y_j\prod_{\ell=i+1}^{n}x_\ell.\label{regularity of joint distributions - proof - 2}
\end{eqnarray}

Combining \eqref{regularity of joint distributions - proof - 1} and \eqref{regularity of joint distributions - proof - 2} we obtain
\begin{eqnarray}
    \lefteqn{2\,d_{\scriptsize{\rm TV}}^{(n)}\big(\pr^{\mu_1}\circ(Z_1,\ldots,Z_n)^{-1},\pr^{\mu_2}\circ(Z_1,\ldots,Z_n)^{-1}\big)}\nonumber\\[1mm]
    & = & \sum_{(k_1,\ldots,k_n)\in\N_0^n}\big|\pr^{\mu_1}[(Z_1,\ldots,Z_n)=(k_1,\ldots,k_n)]-\pr^{\mu_2}[(Z_1,\ldots,Z_n)=(k_1,\ldots,k_n)]\big|\nonumber\\
    & = & \sum_{(k_1,\ldots,k_n)\in\N_0^n}\Big|\prod_{j=1}^n{\mu_1}^{*k_{j-1}}[\{k_{j}\}]-\prod_{j=1}^n\mu_2^{*k_{j-1}}[\{k_{j}\}]\Big|\nonumber\\
    & \le & \sum_{(k_1,\ldots,k_n)\in\N_0^n}\sum_{i=1}^n\big|\mu_1^{*k_{i-1}}[\{k_{i}\}]-\mu_2^{*k_{i-1}}[\{k_{i}\}]\big| \,\prod_{j=1}^{i-1}\mu_2^{*k_{j-1}}[\{k_{j}\}]\prod_{\ell=i+1}^{n}\mu_1^{*k_{\ell-1}}[\{k_{\ell}\}]\nonumber\\
    & = & \sum_{i=1}^n\Big\{\sum_{k_1\in\N_0}\cdots\sum_{k_n\in\N_0}\big|\mu_1^{*k_{i-1}}[\{k_{i}\}]-\mu_2^{*k_{i-1}}[\{k_{i}\}]\big| \,\prod_{j=1}^{i-1}\mu_2^{*k_{j-1}}[\{k_{j}\}]\prod_{\ell=i+1}^{n}\mu_1^{*k_{\ell-1}}[\{k_{\ell}\}]\Big\}\nonumber\\
    & =: & \sum_{i=1}^n S_i(\mu_1,\mu_2).\label{regularity of joint distributions - proof - 10}
\end{eqnarray}
Using $\sum_{k_{n}\in\N_0} \mu_1^{*k_{n-1}}[\{k_{n}\}]=1$, we have for $2\le i\le n-1$ that
\begin{eqnarray}
    \lefteqn{S_i(\mu_1,\mu_2)}\nonumber\\
    & = & \sum_{k_1\in\N_0}\cdots\sum_{k_{n-1}\in\N_0}\big|\mu_1^{*k_{i-1}}[\{k_{i}\}]-\mu_2^{*k_{i-1}}[\{k_{i}\}]\big|\sum_{k_n\in\N_0}\,\prod_{j=1}^{i-1}\mu_2^{*k_{j-1}}[\{k_{j}\}] \prod_{\ell=i+1}^{n}\mu_1^{*k_{\ell-1}}[\{k_{\ell}\}]\nonumber\\
    & = & \sum_{k_1\in\N_0}\cdots\sum_{k_{n-1}\in\N_0}\big|\mu_1^{*k_{i-1}}[\{k_{i}\}]-\mu_2^{*k_{i-1}}[\{k_{i}\}]\big|\, \prod_{j=1}^{i-1}\mu_2^{*k_{j-1}}[\{k_{j}\}]\prod_{\ell=i+1}^{n-1}\mu_1^{*k_{\ell-1}}[\{k_{\ell}\}]\nonumber\\
    & = & \sum_{k_1\in\N_0}\cdots\sum_{k_{i}\in\N_0}\big|\mu_1^{*k_{i-1}}[\{k_{i}\}]-\mu_2^{*k_{i-1}}[\{k_{i}\}]\big|\,\prod_{j=1}^{i-1}\mu_2^{*k_{j-1}}[\{k_{j}\}],\nonumber
\end{eqnarray}
where the last step follows by iteration of the previous two steps. Since $d_{\scriptsize{\rm TV}}(\mu_1^{*k},\mu_2^{*k})\le k\,d_{\scriptsize{\rm TV}}(\mu_1,\mu_2)$ for every $k$, we can proceed as
\begin{eqnarray}
    & \le & \sum_{k_1\in\N_0}\cdots\sum_{k_{i-1}\in\N_0}2\,k_{i-1}\,d_{\scriptsize{\rm TV}}(\mu_1,\mu_2)\,\prod_{j=1}^{i-1}\mu_2^{*k_{j-1}}[\{k_{j}\}]\nonumber\\
    & \le & 2\,d_{\scriptsize{\rm TV}}(\mu_1,\mu_2)\sum_{k_1\in\N_0}\cdots\sum_{k_{i-1}\in\N_0}k_{i-1}\,\,\mu_2^{*k_{i-2}}[\{k_{i-1}\}]\,\mu_2^{*k_{i-3}}[\{k_{i-2}\}]\cdots\mu_2^{*k_{0}}[\{k_{1}\}]\nonumber\\
    & = & 2\,d_{\scriptsize{\rm TV}}(\mu_1,\mu_2)\sum_{k_1\in\N_0}\cdots\sum_{k_{i-2}\in\N_0}k_{i-2}\,m_{\mu_2}\,\mu_2^{*k_{i-3}}[\{k_{i-2}\}]\cdots\mu_2^{*k_{0}}[\{k_{1}\}]\nonumber\\
    & = & 2\,d_{\scriptsize{\rm TV}}(\mu_1,\mu_2)\,m_{\mu_2}^{i-1},\label{regularity of joint distributions - proof - 20}
\end{eqnarray}
where the last step follows by iteration. Note that this is again true for $2\le i\le n-1$ since the expression $i-3$ only appears in the above in order to illustrate the iteration for larger $i \geq 3.$ Analogously we obtain
\begin{equation}\label{regularity of joint distributions - proof - 30}
    S_1(\mu_1,\mu_2)\,\le\,2\,d_{\scriptsize{\rm TV}}(\mu_1,\mu_2)\quad\mbox{ and }\quad S_n(\mu_1,\mu_2)\,\le\,2\,d_{\scriptsize{\rm TV}}(\mu_1,\mu_2)\,m_{\mu_2}^{n-1}.
\end{equation}
Now, (\ref{regularity of joint distributions - proof - 10})--(\ref{regularity of joint distributions - proof - 30}) imply (\ref{regularity of joint distributions - eq})
with $C_n(\mu_1,\mu_2)$ replaced by  $\sum_{i=1}^nm_{\mu_2}^{i-1}.$ Due to symmetry the proof for (\ref{regularity of joint distributions - eq})
with $C_n(\mu_1,\mu_2)$ replaced by  $\sum_{i=1}^nm_{\mu_1}^{i-1}$ is analogous, which shows  (\ref{regularity of joint distributions - eq}).
\end{proof}


\begin{lemma}\label{Uniform WLLN}
(i) Let ${\cal N}\subset{\cal N}_1^1$ be a locally uniformly $\psi_1$-integrating set. Then for every $\mu_1\in{\cal N}$, $\varepsilon>0$, and $\eta>0$ there exist some $\delta>0$ and $n_0\in\N$ such that for all $\mu_2\in{\cal N}$ with $d_{\scriptsize{\rm TV}}(\mu_1,\mu_2)\le\delta$,
\begin{equation}\label{Uniform WLLN - 1}
    \pr^{\mu_2}\Big[\,\Big|\frac{1}{n}\sum_{i=1}^n
    X_{0,i}-m_{\mu_2}\Big|\ge\eta\,\Big]\,\le\,\varepsilon\quad\mbox{ for all }n\ge n_0.
\end{equation}

(ii) If ${\cal N}\subset{\cal N}_1^1$ is even uniformly $\psi_1$-integrating, then for every $\varepsilon>0$ and $\eta>0$ there exists some $n_0\in\N$ such that
\begin{equation}\label{Uniform WLLN - 2}
    \sup_{\mu\in{\cal N}}\pr^{\mu}\Big[\,\Big|\frac{1}{n}\sum_{i=1}^n
    X_{0,i}-m_{\mu}\Big|\ge\eta\,\Big]\,\le\,\varepsilon\quad\mbox{ for all }n\ge n_0.
\end{equation}
\end{lemma}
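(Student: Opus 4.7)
The statement is a uniform weak law of large numbers for i.i.d.\ sums, in which the uniformity is controlled by the [locally] uniform $\psi_1$-integrability of $\mathcal{N}$. The natural route is a truncation argument: split each $X_{0,i}$ into a bounded ``head'' and an integrable ``tail,'' apply Chebyshev to the head, and control the tail via the $\psi_1$-integrability assumption.

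First I would prove part (i). Fix $\mu_1\in\mathcal{N}$ and $\varepsilon,\eta>0$. By the locally uniform $\psi_1$-integrability assumption, choose $\delta>0$ and $\ell\in\N$ such that
\[
    \mu_2\in\mathcal{N},\ d_{\scriptsize{\rm TV}}(\mu_1,\mu_2)\le\delta\quad\Longrightarrow\quad \sum_{k=\ell}^\infty k\,\mu_2[\{k\}]\,\le\,\eta\varepsilon/4.
\]
For such $\mu_2$ set $Y_i:=X_{0,i}\wedge \ell$ and $R_i:=X_{0,i}-Y_i$. Then $0\le Y_i\le \ell$, so $\mathrm{Var}^{\mu_2}(Y_1)\le \ell^2$, and $\mathbb{E}^{\mu_2}[R_1]\le \sum_{k\ge\ell}k\,\mu_2[\{k\}]\le \eta\varepsilon/4$. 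Write
\[
    \frac{1}{n}\sum_{i=1}^n X_{0,i}-m_{\mu_2}\;=\;\Bigl(\frac{1}{n}\sum_{i=1}^n Y_i-\mathbb{E}^{\mu_2}[Y_1]\Bigr)\;+\;\Bigl(\frac{1}{n}\sum_{i=1}^n R_i-\mathbb{E}^{\mu_2}[R_1]\Bigr).
\]

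Next I would control each piece on the event that it exceeds $\eta/2$ in absolute value. For the truncated sum, Chebyshev's inequality gives
\[
    \pr^{\mu_2}\Bigl[\,\Bigl|\tfrac{1}{n}\sum_{i=1}^n Y_i-\mathbb{E}^{\mu_2}[Y_1]\Bigr|\ge\tfrac{\eta}{2}\,\Bigr]\;\le\;\frac{4\ell^2}{n\eta^2},
\]
which is $\le \varepsilon/2$ once $n\ge n_0:=\lceil 8\ell^2/(\eta^2\varepsilon)\rceil$. For the remainder, Markov's inequality yields
\[
    \pr^{\mu_2}\Bigl[\,\Bigl|\tfrac{1}{n}\sum_{i=1}^n R_i-\mathbb{E}^{\mu_2}[R_1]\Bigr|\ge\tfrac{\eta}{2}\,\Bigr]\;\le\;\pr^{\mu_2}\Bigl[\tfrac{1}{n}\sum_{i=1}^n R_i\ge\tfrac{\eta}{4}\Bigr]\;\le\;\frac{4}{\eta}\mathbb{E}^{\mu_2}[R_1]\;\le\;\varepsilon,
\]
after shrinking the bound on $\sum_{k\ge\ell} k\mu_2[\{k\}]$ to $\eta\varepsilon/8$ if desired. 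Combining these two estimates and adjusting the constants appropriately delivers~\eqref{Uniform WLLN - 1}.

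Finally, part (ii) is identical except that the choice of $\ell$ (and hence of $n_0$) can be made from the uniform $\psi_1$-integrability of $\mathcal{N}$, so that it does not depend on any specific $\mu_1$. No technical obstacle arises: the main (mild) bookkeeping point is the choice of the thresholds so that the Chebyshev/Markov bounds combine to produce the prescribed tolerance $\varepsilon$, and this is a routine matter of sharpening the tail bound to $\eta\varepsilon/8$ and taking $n_0=\lceil 16\ell^2/(\eta^2\varepsilon)\rceil$.
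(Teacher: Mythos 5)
Your proof is correct and takes essentially the same route as the paper: truncate at level $\ell$ chosen via [locally] uniform $\psi_1$-integrability, apply Chebyshev to the bounded head and Markov to the tail, and pick $n_0$ accordingly. The only noteworthy difference is in part (ii), where the paper simply invokes Chung's uniform strong law of large numbers whereas you reuse the same truncation argument with an $\ell$ independent of $\mu_1$; your version is self-contained, which is perfectly fine.
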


\begin{proof}
Part (ii) is an immediate consequence of Chung's \cite{Chung1951} uniform (strong) law of large numbers. So it suffices to prove part (i). Fix $\mu_1 \in {\cal N}$, $\varepsilon \in (0,2)$ and $\eta>0$. For every $\ell\in\N$ let $X_{0,i}^\ell:=X_{0,i}\eins_{\{X_{0,i}\le\ell\}}$ be the $\ell$-truncation of $X_{0,i}$. Using the decomposition $X_{0,i}=X_{0,i}^\ell+X_{0,i}\eins_{\{X_{0,i}>\ell\}}$ and the triangle inequality, we obtain
\begin{eqnarray*}
    \pr^{\mu_2}\Big[\,\Big|\frac{1}{n}\sum_{i=1}^n X_{0,i}-m_{\mu_2}\Big|\ge\eta\Big]
    & \le & \pr^{\mu_2}\Big[\,\Big|\frac{1}{n}\sum_{i=1}^n X_{0,i}^\ell-\ex^{\mu_2}[X_{0,1}^\ell]\Big|\ge\eta/3\Big]\\
    & & +\,\pr^{\mu_2}\Big[\,\frac{1}{n}\sum_{i=1}^n X_{0,i}\eins_{\{X_{0,i}>\ell\}}\ge\eta/3\Big]\\
    & & +\,\pr^{\mu_2}\Big[\,\ex^{\mu_2}[X_{0,1}\eins_{\{X_{0,1}>\ell\}}]\ge\eta/3\Big]\\[1mm]
    & =: & S_1(\eta,n,\ell,\mu_2)+S_2(\eta,n,\ell,\mu_2)+S_3(\eta,\ell,\mu_2).
\end{eqnarray*}
By Markov's inequality $S_2(\eta,n,\ell,\mu_2)$ is bounded above by $3\eta^{-1}\ex^{\mu_2}[X_{0,1}\eins_{\{X_{0,1}> \ell\}}]$. The assumption on ${\cal N}$ yields that one can choose $\delta>0$ and $\ell_0=\ell_0(\varepsilon,\eta)\in\N$ such that
$$
    \mu_2\in{\cal N},\quad d_{\scriptsize{\rm TV}}(\mu_1,\mu_2)\le\delta\quad\Longrightarrow\quad \ex^{\mu_2}[X_{0,1}\eins_{\{X_{0,1} > \ell_0\}}]\,\le\, \eps \eta/6 < \eta/3.
$$
Hence
$$
    \mu_2\in{\cal N},\quad d_{\scriptsize{\rm TV}}(\mu_1,\mu_2)\le\delta\quad\Longrightarrow\quad S_2(\eta,n,\ell_0,\mu_2)+S_3(\eta,\ell_0,\mu_2) \leq \eps/2 + 0
$$
for all $n \in \N$.
By Chebychev's inequality, we further obtain (regardless of $\mu_2 \in {\cal N}_1^1$)
$$
   S_1(\eta,n,\ell_0,\mu_2)\,\le\,9\eta^{-2}\ell_0^2\,n^{-1} \leq \eps/2
$$
for all $n \geq n_0$ for some sufficiently large $n_0 \in \N$.
\end{proof}


\section{Uniform conditional weak consistency of the Lotka--Nagaev estimator}\label{Sec Uniform conditional weak consistency}

\begin{theorem}\label{uniform conditional weak consistency for Lotka}
(i) Let ${\cal N}\subset{\cal N}_1^1$ be a locally uniformly $\psi_1$-integrating set with $m_{\mu} > 1$ for all $\mu \in {\cal N}$. Then for every $\mu_1\in{\cal N}$, $\varepsilon>0$, and $\eta>0$ there exist some $\delta>0$ and $n_0\in\N$ such that for all $\mu_2\in{\cal N}$ with $d_{\scriptsize{\rm TV}}(\mu_1,\mu_2)\le\delta$,
\begin{equation}\label{uniform conditional weak consistency - eq - 10}
    \pr^{\mu_2}\big[|\widehat m_n-m_{\mu_2}|\ge\eta\,\big|Z_{n-1}>0\big]\,\le\,\varepsilon\quad\mbox{ for all }n\ge n_0.
\end{equation}

(ii) If ${\cal N}\subset{\cal N}_1^1$ is even uniformly $\psi_1$-integrating with $\inf_{\mu\in{\cal N}}m_\mu>1$, then for every $\varepsilon>0$ and $\eta>0$ there exists some $n_0\in\N$ such that
$$
    \sup_{\mu\in{\cal N}}\,\pr^\mu\big[|\widehat m_n-m_\mu|\ge\eta\,\big|Z_{n-1}>0\big]\,\leq\,\varepsilon\quad\mbox{ for all }n\ge n_0.
$$
\end{theorem}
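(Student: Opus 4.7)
The plan is to condition on $Z_{n-1}$ and reduce the problem to a uniform weak law of large numbers. The key observation is that under $\pr^{\mu_2}$, conditional on $\{Z_{n-1} = k\}$ with $k>0$, the offspring $X_{n-1,1},\ldots,X_{n-1,k}$ are i.i.d.\ with law $\mu_2$ and independent of $Z_{n-1}$, so the conditional law of $\widehat m_n$ given $\{Z_{n-1} = k\}$ coincides with the law of $\tfrac{1}{k}\sum_{i=1}^k X_{0,i}$ under $\pr^{\mu_2}$. This brings Lemma~\ref{Uniform WLLN} into play for the part where $Z_{n-1}$ is large, while Lemma~\ref{unifgrowth} handles the rare case where $Z_{n-1}$ is small.

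For part (i), I would fix $\mu_1 \in {\cal N}$, $\eps > 0$, and $\eta > 0$. First, Lemma~\ref{Uniform WLLN}(i) applied at level $\eps/2$ supplies a radius $\delta_1 > 0$ and a threshold $K \in \N$ such that
$$\pr^{\mu_2}\Big[\Big|\tfrac{1}{k}\sum_{i=1}^k X_{0,i} - m_{\mu_2}\Big| \ge \eta\Big] \,\le\, \eps/2$$
whenever $d_{\scriptsize{\rm TV}}(\mu_1,\mu_2) \le \delta_1$ and $k \ge K$. Second, for each of the finitely many values $k \in \{1,\ldots,K-1\}$, Lemma~\ref{unifgrowth}(i) applied at level $\eps/(2K)$ yields a radius and a waiting time for the control of $\pr^{\mu_2}[Z_{n-1}=k \mid Z_{n-1} > 0]$; taking the intersection of these radii (with $\delta_1$) and the maximum of the waiting times produces a single $\delta \in (0,\delta_1]$ and $n_0 \in \N$ such that
$$\pr^{\mu_2}[Z_{n-1}=k \mid Z_{n-1} > 0] \,\le\, \eps/(2K)$$
uniformly in $k \in \{1,\ldots,K-1\}$, in $\mu_2 \in {\cal N}$ with $d_{\scriptsize{\rm TV}}(\mu_1,\mu_2) \le \delta$, and in $n-1 \ge n_0$.

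Combining these two ingredients via the decomposition
$$\pr^{\mu_2}[|\widehat m_n - m_{\mu_2}| \ge \eta \mid Z_{n-1} > 0] \,=\, \sum_{k=1}^{\infty} \pr^{\mu_2}[Z_{n-1} = k \mid Z_{n-1} > 0] \, \pr^{\mu_2}\Big[\Big|\tfrac{1}{k}\sum_{i=1}^k X_{0,i} - m_{\mu_2}\Big| \ge \eta\Big]$$
should yield the claim: the ``small $Z_{n-1}$'' sum over $1 \le k < K$ is at most $(K-1) \cdot \eps/(2K) < \eps/2$ (by bounding only the first factor using Lemma~\ref{unifgrowth}), while the ``large $Z_{n-1}$'' sum over $k \ge K$ is at most $\eps/2$ (by bounding the second factor using Lemma~\ref{Uniform WLLN} and using that the remaining conditional probabilities sum to at most one). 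Part~(ii) then follows by the same argument with Lemmas~\ref{Uniform WLLN}(ii) and~\ref{unifgrowth}(ii) replacing their local counterparts, yielding a single $n_0$ valid uniformly on all of ${\cal N}$.

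No step looks technically difficult; the main care required is bookkeeping, namely merging the $K$ radii and waiting times produced by Lemma~\ref{unifgrowth} with the $(\delta_1, K)$ delivered by Lemma~\ref{Uniform WLLN} into a single compatible pair $(\delta, n_0)$. Since only finitely many values of $k$ need to be controlled in the small-$Z_{n-1}$ part, this is routine and poses no real obstacle.
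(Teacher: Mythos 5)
Your proposal is correct and takes essentially the same approach as the paper: decompose the conditional probability over the value of $Z_{n-1}$, invoke Lemma~\ref{Uniform WLLN}(i) to bound the summands for large generation sizes, and invoke Lemma~\ref{unifgrowth}(i) to make the conditional weight on the finitely many small generation sizes small, merging the resulting $\delta$'s and $n_0$'s. The minor points you flag (applying Lemma~\ref{unifgrowth} separately for each $k<K$ and intersecting radii, and the index shift from $Z_n$ to $Z_{n-1}$) are handled implicitly in the paper in exactly the way you describe.
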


\begin{proof}
We first prove part (i). Fix $\mu_1\in{\cal N}$, $\varepsilon>0$, and $\eta>0$. For every $\mu_2\in{\cal N}$ we have
\begin{eqnarray}
    \lefteqn{\pr^{\mu_2}\big[|\widehat m_n-m_{\mu_2}|\ge\eta\,\big|Z_{n-1}>0\big]}\nonumber\\
    & = & \sum_{k=1}^\infty\pr^{\mu_2}\big[|\widehat m_n-m_{\mu_2}|\ge\eta\,\big|Z_{n-1}=k\big]\,\pr^{\mu_2}[Z_{n-1}=k|Z_{n-1}>0]\nonumber\\
    & = & \sum_{k=1}^\infty\pr^{\mu_2}\big[|Z_n/k-m_{\mu_2}|\ge\eta\,\big|Z_{n-1}=k\big]\,\pr^{\mu_2}[Z_{n-1}=k|Z_{n-1}>0]\nonumber\\
    & = & \sum_{k=1}^\infty\pr^{\mu_2}\Big[\Big|\frac{1}{k}\sum_{i=1}^kX_{n-1,i}-m_{\mu_2}\Big|\ge\eta\Big]\,\pr^{\mu_2}[Z_{n-1}=k|Z_{n-1}>0].\label{uniform conditional weak consistency for Lotka - proof - 10}
\end{eqnarray}
By part (i) of Lemma \ref{Uniform WLLN}, we can find some $\delta>0$ and $k_0\in\N$ such that for all $\mu_2\in{\cal N}$ with $d_{\scriptsize{\rm TV}}(\mu_1,\mu_2)\le\delta$,
\begin{equation}\label{uniform conditional weak consistency for Lotka - proof - 20}
    \pr^{\mu_2}\Big[\Big|\frac{1}{k}\sum_{i=1}^kX_{n-1,i}-m_{\mu_2}\Big|\ge\eta\Big]\,\le\,\varepsilon/2\quad\mbox{ for all }k\ge k_0.
\end{equation}
From (\ref{uniform conditional weak consistency for Lotka - proof - 10}) and (\ref{uniform conditional weak consistency for Lotka - proof - 20}) we obtain that for all $\mu_2\in{\cal N}$ with $d_{\scriptsize{\rm TV}}(\mu_1,\mu_2)\le\delta$,
\begin{eqnarray}
    \lefteqn{\pr^{\mu_2}\big[|\widehat m_n-m_{\mu_2}|\ge\eta\,\big|Z_{n-1}>0\big]}\nonumber\\
    & \le & \varepsilon/2\,+\,\sum_{k=1}^{k_0}\pr^{\mu_2}\Big[\Big|\frac{1}{k}\sum_{i=1}^kX_{n-1,i}-m_{\mu_2}\Big|\ge\eta\Big]\,\pr^{\mu_2}[Z_{n-1}=k|Z_{n-1}>0]\nonumber\\
    & \le & \varepsilon/2\,+\,\sum_{k=1}^{k_0}\pr^{\mu_2}[Z_{n-1}=k|Z_{n-1}>0].\label{uniform conditional weak consistency for Lotka - proof - 40}
\end{eqnarray}
By part (i) of Lemma \ref{unifgrowth} we can find some $n_0\in\N$ (and decrease the $\delta>0$ chosen above if necessary) such that for all $\mu_2\in{\cal N}$ with $d_{\scriptsize{\rm TV}}(\mu_1,\mu_2)\le\delta$ and $n\ge n_0$,
\begin{equation}\label{uniform conditional weak consistency for Lotka - proof - 50}
    \pr^{\mu_2}[Z_{n-1}=k|Z_{n-1}>0]\,\le\,\varepsilon/(2k_0)\quad\mbox{ for all }k=1,\ldots,k_0.
\end{equation}
Now, (\ref{uniform conditional weak consistency for Lotka - proof - 40})--(\ref{uniform conditional weak consistency for Lotka - proof - 50}) yield that for all $\mu_2\in{\cal N}$ with $d_{\scriptsize{\rm TV}}(\mu_1,\mu_2)\le\delta$ and all $n\ge n_0$,
$$
    \pr^{\mu_2}\big[|\widehat m_n-m_{\mu_2}|\ge\eta\,\big|Z_{n-1}>0\big]\,\le\,\varepsilon.
$$
This implies (\ref{uniform conditional weak consistency - eq - 10}).

Part (ii) can be shown analogously. Use parts (ii) instead of (i) of Lemmas \ref{Uniform WLLN} and Lemma \ref{unifgrowth}, and remove the restriction $d_{\scriptsize{\rm TV}}(\mu_1,\mu_2)\le\delta$ everywhere.
\end{proof}


\section{Proof of Theorem \ref{asymptotic robustness of lotka}}\label{proof of main results}

Note that (uniform) robustness of $(\widehat m_n)$ on ${\cal N}\subset{\cal N}_1^1$ in the sense of Definition \ref{def robustness} holds if and only if $(\widehat m_n)$ is both (uniformly) asymptotically and (uniformly) finite sample robust on ${\cal N}$ in the following sense.

\begin{definition}\label{def robustness - asymp finite}
(i) The sequence $(\widehat m_n)$ is said to be asymptotically robust on ${\cal N}$ if for every $\mu_1\in{\cal N}$ and $\varepsilon>0$ there are some $\delta>0$ and $n_0\in\N$ such that
$$
    \mu_2\in{\cal N},\quad d_{\scriptsize{\rm TV}}(\mu_1,\mu_2)\le\delta\quad\Longrightarrow\quad \rho(\pr^{\mu_1}\circ \widehat m_n^{-1}\,,\,\pr^{\mu_2}\circ \widehat m_n^{-1})\le\varepsilon \quad\mbox{for all }n\ge n_0.
$$
It is said to be uniformly asymptotically robust on ${\cal N}$ if $\delta$ can be chosen independently of $\mu_1\in{\cal N}$.

(ii) The sequence $(\widehat m_n)$ is said to be finite sample robust on ${\cal N}$ if for every $\mu_1\in{\cal N}$, $n\in\N$, and $\varepsilon>0$ there  is some $\delta>0$ such that
\begin{equation}\label{def quali rob - finite sample}
    \mu_2\in{\cal N},\quad d_{\scriptsize{\rm TV}}(\mu_1,\mu_2)\le\delta\quad\Longrightarrow\quad \rho(\pr^{\mu_1}\circ \widehat m_n^{-1}\,,\,\pr^{\mu_2}\circ \widehat m_n^{-1})\le\varepsilon.
\end{equation}
It is said to be uniformly finite sample robust on ${\cal N}$ if $\delta$ can be chosen independently of $\mu_1\in{\cal N}$.
\end{definition}

The claim of Theorem \ref{asymptotic robustness of lotka} is an immediate consequence of Theorems \ref{hampel-huber generalized}, \ref{hampel-huber generalized - finite sample}, and \ref{hampel-huber converse} below. We first require the following lemma. Write $\pr_A^{\mu}[\,\cdot\,] := \pr^{\mu}[\,\cdot\,\vert A]$ for any $\mu \in {\cal N}_1^1$ and $A \in {\cal F}$.

\begin{lemma}\label{proof of hampel-huber generalized - lemma}
(i) Let ${\cal N}\subset{\cal N}_1^1$ be any set such that $m_\mu>1$ for all $\mu\in{\cal N}$. Then the sequence $(\widehat m_n)$ is asymptotically robust on ${\cal N}$ if and only if for every $\mu_1\in{\cal N}$ and $\varepsilon>0$ there are some $\delta>0$ and $n_0\in\N$ such that
\begin{eqnarray}
    \lefteqn{\mu_2\in{\cal N},\quad d_{\scriptsize{\rm TV}}(\mu_1,\mu_2)\le\delta}\nonumber\\
    & & \Longrightarrow\quad \rho(\pr_{\{Z_{n-1}>0\}}^{\mu_1}\circ \widehat m_n^{-1},\pr_{\{Z_{n-1}>0\}}^{\mu_2}\circ \widehat m_n^{-1})\le\varepsilon \quad\mbox{for all }n\ge n_0. \label{proof of hampel-huber generalized - lemma - 10}
\end{eqnarray}

(ii) Let ${\cal N}\subset{\cal N}_1^1$ be a uniformly $\psi_1$-integrating set with $\inf_{\mu\in{\cal N}}m_\mu>1$. Then the sequence $(\widehat m_n)$ is uniformly asymptotically robust on ${\cal N}$ if and only if for every $\varepsilon>0$ there are some $\delta>0$ and $n_0\in\N$ such that
\begin{eqnarray}
    \lefteqn{\mu_1,\mu_2\in{\cal N},\quad d_{\scriptsize{\rm TV}}(\mu_1,\mu_2)\le\delta}\nonumber\\
    & & \Longrightarrow\quad \rho(\pr_{\{Z_{n-1}>0\}}^{\mu_1}\circ \widehat m_n^{-1},\pr_{\{Z_{n-1}>0\}}^{\mu_2}\circ \widehat m_n^{-1})\le\varepsilon \quad\mbox{for all }n\ge n_0. \label{proof of hampel-huber generalized - lemma - 20}
\end{eqnarray}
\end{lemma}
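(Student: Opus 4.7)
The plan is based on the mixture decomposition
$$\pr^\mu\circ\widehat m_n^{-1} \,=\, r_n^\mu\,\delta_0 \,+\, (1-r_n^\mu)\,\nu_n^\mu, \qquad r_n^\mu\,:=\,\pr^\mu[Z_{n-1}=0],\quad \nu_n^\mu\,:=\,\pr_{\{Z_{n-1}>0\}}^\mu\circ\widehat m_n^{-1},$$
which holds because $\widehat m_n\equiv 0$ on $\{Z_{n-1}=0\}$. Plugging this decomposition into the defining condition of the Prohorov metric $\rho$ and splitting cases according to whether $0$ belongs to $A$, to $A^\varepsilon\setminus A$, or to neither (in the middle case replacing $A$ by $A\cup\{0\}$, which has the same $\varepsilon$-enlargement and a well-behaved contribution at $0$) yields the two-sided comparison
$$\rho(\pr^{\mu_1}\!\circ\widehat m_n^{-1},\pr^{\mu_2}\!\circ\widehat m_n^{-1}) \,\le\, \rho(\nu_n^{\mu_1},\nu_n^{\mu_2})+2|r_n^{\mu_1}-r_n^{\mu_2}| \qquad\text{(backward bound),}$$
$$\rho(\nu_n^{\mu_1},\nu_n^{\mu_2}) \,\le\, \frac{\rho(\pr^{\mu_1}\!\circ\widehat m_n^{-1},\pr^{\mu_2}\!\circ\widehat m_n^{-1})+2|r_n^{\mu_1}-r_n^{\mu_2}|}{\min\{1-r_n^{\mu_1},1-r_n^{\mu_2}\}} \qquad\text{(forward bound).}$$

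For the ``$\Leftarrow$'' direction of part~(i), fix $\mu_1\in{\cal N}$ and $\varepsilon>0$. The conditional hypothesis~(\ref{proof of hampel-huber generalized - lemma - 10}) supplies $\delta'>0$ and $n_0$ with $\rho(\nu_n^{\mu_1},\nu_n^{\mu_2})\le\varepsilon/3$ for all $\mu_2\in{\cal N}$ with $d_{\scriptsize{\rm TV}}(\mu_1,\mu_2)\le\delta'$ and $n\ge n_0$, while Lemma~\ref{proof of asymptotic robustness of lotka - lemma 1}(i) supplies $\delta''>0$ with $|r_n^{\mu_1}-r_n^{\mu_2}|\le\varepsilon/3$ uniformly in $n$ whenever $d_{\scriptsize{\rm TV}}(\mu_1,\mu_2)\le\delta''$. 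Setting $\delta:=\min\{\delta',\delta''\}$ and applying the backward bound yields asymptotic robustness with threshold $\varepsilon$.

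For the ``$\Rightarrow$'' direction one exploits the forward bound, where the key issue is to keep the denominator bounded away from $0$. Since $m_{\mu_1}>1$ forces $q_{\mu_1}<1$, and the sequence $r_n^{\mu_1}=f_{\mu_1}^{(n-1)}(0)$ satisfies $r_n^{\mu_1}\le q_{\mu_1}$ for every $n$, one has $1-r_n^{\mu_1}\ge p_1:=1-q_{\mu_1}>0$. Shrinking $\delta$ if necessary, Lemma~\ref{proof of asymptotic robustness of lotka - lemma 1}(i) yields $|r_n^{\mu_1}-r_n^{\mu_2}|\le\min\{p_1/2,\,p_1\varepsilon/8\}$ uniformly in $n$; this ensures $1-r_n^{\mu_2}\ge p_1/2$, so the denominator is $\ge p_1/2$. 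Shrinking $\delta$ once more and enlarging $n_0$, asymptotic robustness yields $\rho(\pr^{\mu_1}\!\circ\widehat m_n^{-1},\pr^{\mu_2}\!\circ\widehat m_n^{-1})\le p_1\varepsilon/4$ for $n\ge n_0$, and substituting into the forward bound gives the desired $\rho(\nu_n^{\mu_1},\nu_n^{\mu_2})\le\varepsilon$.

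Part~(ii) is proved by exactly the same template, replacing the pointwise lower bound $1-r_n^{\mu_1}\ge p_1$ by the uniform bound $\inf_{\mu\in{\cal N}}(1-q_\mu)\ge p>0$ from Lemma~\ref{uniformextinction}(ii), and using Lemma~\ref{proof of asymptotic robustness of lotka - lemma 1}(ii) in place of Lemma~\ref{proof of asymptotic robustness of lotka - lemma 1}(i). The main obstacle is really the careful derivation of the two Prohorov inequalities---especially the forward one, whose proof requires a small case analysis and crucially depends on the denominator $\min\{1-r_n^{\mu_1},1-r_n^{\mu_2}\}$ staying bounded below by a positive constant; this is precisely where the assumption $\inf_{\mu\in{\cal N}}m_\mu>1$ (as opposed to merely $m_\mu>1$ for each individual $\mu$) enters in part~(ii).
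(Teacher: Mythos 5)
Your proof is correct, and it rests on the same mixture decomposition $\pr^\mu\circ\widehat m_n^{-1}=r_n^\mu\,\delta_0+(1-r_n^\mu)\,\nu_n^\mu$ and the same input (Lemma~\ref{proof of asymptotic robustness of lotka - lemma 1} for the extinction probabilities, Lemma~\ref{uniformextinction}(ii) for the uniform lower bound on survival). The ``$\Leftarrow$'' direction is essentially what the paper does: plug the decomposition into the defining inequality of $\rho$ and absorb the extra terms into a slightly enlarged $\varepsilon$. Where you genuinely diverge is the ``$\Rightarrow$'' direction. The paper passes to the bounded Lipschitz metric $\beta$, estimates $\beta(\nu_n^{\mu_1},\nu_n^{\mu_2})\le 4\varepsilon/\bigl(\pr^{\mu_1}[Z_{n-1}>0]\,\pr^{\mu_2}[Z_{n-1}>0]\bigr)$ directly, and translates back via $\rho^2\le\tfrac32\beta$. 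You instead stay entirely inside the Prohorov metric and derive a self-contained inequality
\[
\rho(\nu_n^{\mu_1},\nu_n^{\mu_2}) \,\le\, \frac{\rho(\pr^{\mu_1}\!\circ\widehat m_n^{-1},\pr^{\mu_2}\!\circ\widehat m_n^{-1})+2|r_n^{\mu_1}-r_n^{\mu_2}|}{\min\{1-r_n^{\mu_1},1-r_n^{\mu_2}\}},
\]
whose proof hinges on the small but essential trick of replacing $A$ by $A\cup\{0\}$ when $0\in A^\gamma\setminus A$ (so that $\delta_0[A^\gamma]-\delta_0[A]$ never contributes $r_n^{\mu_2}$ in full). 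I checked the bound and it is indeed valid; the constants in your application (choosing $|r_n^{\mu_1}-r_n^{\mu_2}|\le\min\{p_1/2,\,p_1\varepsilon/8\}$ and $\rho\le p_1\varepsilon/4$ to conclude $\rho(\nu_n^{\mu_1},\nu_n^{\mu_2})\le\varepsilon$) work out. The advantage of your route is that it avoids introducing a second metric and the quantitative comparison $\rho^2\le\tfrac32\beta$; the price is the case analysis needed to justify the forward Prohorov inequality. The paper's BL argument is slightly longer but mechanical, whereas yours is more self-contained and arguably cleaner. Both correctly exhibit the role of $\min\{1-r_n^{\mu_1},1-r_n^{\mu_2}\}\ge$ positive constant, and both correctly place the need for $\inf_{\mu\in{\cal N}}m_\mu>1$ in part~(ii).
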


\begin{proof}
We start by proving part (i). First assume that (\ref{proof of hampel-huber generalized - lemma - 10}) holds. By (\ref{proof of hampel-huber generalized - lemma - 10}) and part~(i) of Lemma \ref{proof of asymptotic robustness of lotka - lemma 1} we obtain that for every $\mu_1\in{\cal N}$ and $\varepsilon>0$ there are some $\delta>0$ and $n_0\in\N$ such that for every $n\ge n_0$, $\mu_2\in{\cal N}$ with $d_{\scriptsize{\rm TV}}(\mu_1,\mu_2)\le\delta$, and $A\in{\cal B}(\R_+)$,
\begin{eqnarray*}
    \lefteqn{\pr^{\mu_1}\circ \widehat m_n^{-1}[A]}\nonumber\\
    & = & \pr_{\{Z_{n-1}>0\}}^{\mu_1}\circ \widehat m_n^{-1}[A]\cdot\pr^{\mu_1}[Z_{n-1}>0]\,+\,\delta_0[A]\cdot\pr^{\mu_1}[Z_{n-1}=0]\\
    & \le & \big(\pr_{\{Z_{n-1}>0\}}^{\mu_2}\circ \widehat m_n^{-1} [A^\varepsilon]+\varepsilon\big)\cdot\big(\pr^{\mu_2}[Z_{n-1}>0]+\varepsilon\big)\,+\,\delta_0[A^\varepsilon]\cdot\big(\pr^{\mu_2}[Z_{n-1}=0]+\varepsilon\big)\\
    & \le & \pr_{\{Z_{n-1}>0\}}^{\mu_2}\circ \widehat m_n^{-1} [A^\varepsilon]\cdot\pr^{\mu_2}[Z_{n-1}>0]\,+\,\delta_0[A^\varepsilon]\cdot\pr^{\mu_2}[Z_{n-1}=0]\,+\,3\varepsilon\,+\,\varepsilon^2\\
    & = & \pr^{\mu_2}\circ \widehat m_n^{-1}[A^{\varepsilon}]\,+\,3\varepsilon\,+\,\varepsilon^2\\
    & \le & \pr^{\mu_2}\circ \widehat m_n^{-1}[A^{(3\varepsilon+\varepsilon^2)}]\,+\,(3\varepsilon+\varepsilon^2).
\end{eqnarray*}
Hence, we can find for every $\mu_1\in{\cal N}$ and $\widetilde\varepsilon>0$ some $\delta>0$ and $n_0\in\N$ such that
$$
    \mu_2\in{\cal N},\quad d_{\scriptsize{\rm TV}}(\mu_1,\mu_2)\le\delta\quad\Longrightarrow\quad \rho\big(\pr^{\mu_1}\circ\widehat m_n^{-1},\,\pr^{\mu_2}\circ\widehat m_n^{-1}\big)\le \widetilde\varepsilon \quad\mbox{for all }n\ge n_0.
$$
This means that $(\widehat m_n)$ is asymptotically robust on ${\cal N}$.

Now assume that the sequence $(\widehat m_n)$ is asymptotically robust on ${\cal N}$. It suffices to show that (\ref{proof of hampel-huber generalized - lemma - 10}) holds when the Prohorov metric $\rho$ is replaced by the bounded Lipschitz metric
$$
    \beta(\mu_1,\mu_2)\,:=\,\sup_{h\in{\rm BL}_1}\Big|\int h\,d\mu_1-\int h\,d\mu_2\Big|,
$$
where ${\rm BL}_1$ is the set of all functions $h:\R_+\rightarrow\R_+$ satisfying $\|h\|_{{\rm BL}}:=\|h\|_{{\rm L}}+\|h\|_{\infty}\le 1$ with $\|h\|_{{\rm L}}:=\sup_{x\not=y}|h(x)-h(y)|/|x-y|$ and $\|h\|_\infty:=\sup_x|h(x)|$; following the instructions on p.\,398 in \cite{Dudley2002} it can be easily shown that $\rho^2\le\frac{3}{2}\beta$. By the asymptotic robustness and part (i) of Lemma \ref{proof of asymptotic robustness of lotka - lemma 1} we obtain that for every $\mu_1\in{\cal N}$ and $\varepsilon \in (0,(1-q_{\mu_1})/2)$ there are some $\delta>0$ and $n_0\in\N$ such that for every $n\ge n_0$ and $\mu_2\in{\cal N}$ with $d_{\scriptsize{\rm TV}}(\mu_1,\mu_2)\le\delta$,
\begin{eqnarray}
    \lefteqn{\beta\big(\pr_{\{Z_{n-1}>0\}}^{\mu_1}\circ \widehat m_n^{-1},\pr_{\{Z_{n-1}>0\}}^{\mu_2}\circ \widehat m_n^{-1}\big)}\nonumber\\
    & \le & \sup_{h\in{\rm BL}_1}\Big|\frac{\int h\,d\pr^{\mu_1}\circ\widehat m_n^{-1}}{\pr^{\mu_1}[Z_{n-1}>0]}-\frac{\int h\,d\pr^{\mu_2}\circ\widehat m_n^{-1}}{\pr^{\mu_2}[Z_{n-1}>0]}\Big|\,+\,\Big|\frac{\pr^{\mu_1}[Z_{n-1}=0]}{\pr^{\mu_1}[Z_{n-1}>0]}-\frac{\pr^{\mu_2}[Z_{n-1}=0]}{\pr^{\mu_2}[Z_{n-1}>0]}\Big|\nonumber\\
    & \le & \frac{\sup_{h\in{\rm BL}_1}|\int h\,d\pr^{\mu_1}\circ\widehat m_n^{-1}-\int h\,d\pr^{\mu_2}\circ\widehat m_n^{-1}|}{\pr^{\mu_1}[Z_{n-1}>0]\cdot\pr^{\mu_2}[Z_{n-1}>0]}\,+\,\frac{|\pr^{\mu_2}[Z_{n-1}>0]-\pr^{\mu_1}[Z_{n-1}>0]|}{\pr^{\mu_1}[Z_{n-1}>0]\cdot\pr^{\mu_2}[Z_{n-1}>0]}\nonumber\\
    & & +\,\frac{|\pr^{\mu_1}[Z_{n-1}=0]-\pr^{\mu_2}[Z_{n-1}=0]|}{\pr^{\mu_1}[Z_{n-1}>0]\cdot\pr^{\mu_2}[Z_{n-1}>0]}\,+\,\frac{|\pr^{\mu_2}[Z_{n-1}>0]-\pr^{\mu_1}[Z_{n-1}>0]|}{\pr^{\mu_1}[Z_{n-1}>0]\cdot\pr^{\mu_2}[Z_{n-1}>0]}\nonumber\\
    & \le & 4\,\frac{\varepsilon}{\pr^{\mu_1}[Z_{n-1}>0]\cdot\pr^{\mu_2}[Z_{n-1}>0]} \ \le \ 4\,\frac{\varepsilon}{(1-q_{\mu_1})\cdot(1-q_{\mu_1}-\eps)}\,. \label{proof of hampel-huber generalized - lemma - 30}
\end{eqnarray}
This implies that (\ref{proof of hampel-huber generalized - lemma - 10}) holds (for the bounded Lipschitz metric).

Part (ii) can be shown analogously. Use part (ii) instead of (i) of
Lemma \ref{proof of asymptotic robustness of lotka - lemma 1}. Replace
the last bound in \eqref{proof of hampel-huber generalized - lemma -
  30} by $4\,\frac{\varepsilon}{(1-q_{\mu_1})\cdot(1-q_{\mu_2})}$,
which is less than or equal to $4 \eps/p^2$ by part (ii) of Lemma
\ref{uniformextinction} (further decreasing $\delta>0$ if necessary).
\end{proof}

\begin{theorem}\label{hampel-huber generalized}
(i) The sequence $(\widehat m_n)$ is asymptotically robust on any locally uniformly $\psi_1$-integrating set ${\cal N}\subset{\cal N}_1^1$ with $m_\mu>1$ for all $\mu\in{\cal N}$.

(ii) The sequence $(\widehat m_n)$ is uniformly asymptotically robust on any uniformly $\psi_1$-inte\-grating set ${\cal N}\subset{\cal N}_1^1$ with $\inf_{\mu\in{\cal N}}m_\mu>1$.
\end{theorem}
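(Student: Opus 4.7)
The plan is to deduce both parts from the auxiliary results already assembled, the key reduction being Lemma~\ref{proof of hampel-huber generalized - lemma}. That lemma shows that (uniform) asymptotic robustness is equivalent to (uniform) asymptotic closeness of the conditional distributions $\pr^{\mu_i}_{\{Z_{n-1}>0\}}\circ \widehat m_n^{-1}$ given non-extinction up to time $n-1$. So the real work is to show that, for $n$ sufficiently large, these conditional distributions are close in Prohorov metric whenever $d_{\scriptsize{\rm TV}}(\mu_1,\mu_2)$ is small.

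For part~(i), fix $\mu_1\in{\cal N}$ and $\varepsilon>0$, and set $\eta:=\varepsilon/3$. Lemma~\ref{continuity of m}(i) gives $\delta_1>0$ such that $|m_{\mu_1}-m_{\mu_2}|\le\eta$ whenever $\mu_2\in{\cal N}$ and $d_{\scriptsize{\rm TV}}(\mu_1,\mu_2)\le\delta_1$. Theorem~\ref{uniform conditional weak consistency for Lotka}(i) provides $\delta_2>0$ and $n_0\in\N$ such that
$$
   \pr^{\mu_2}\bigl[\,|\widehat m_n-m_{\mu_2}|\ge\eta\,\bigm|\,Z_{n-1}>0\bigr]\,\le\,\varepsilon/3
$$
for every $\mu_2\in{\cal N}$ with $d_{\scriptsize{\rm TV}}(\mu_1,\mu_2)\le\delta_2$ and every $n\ge n_0$. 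The same inequality obviously applies to $\mu_1$ itself (take $\mu_2=\mu_1$). Setting $\delta:=\min\{\delta_1,\delta_2\}$, the core observation is that a law concentrated within $\eta$ of some point $m$ except on an event of mass $\le\varepsilon/3$ is within Prohorov distance $\max\{\eta,\varepsilon/3\}=\varepsilon/3$ of $\delta_m$. Applying this to $\pr^{\mu_i}_{\{Z_{n-1}>0\}}\circ \widehat m_n^{-1}$ with $m=m_{\mu_i}$ for $i=1,2$, and using the triangle inequality together with $\rho(\delta_{m_{\mu_1}},\delta_{m_{\mu_2}})\le|m_{\mu_1}-m_{\mu_2}|\le\eta$, we obtain
$$
   \rho\bigl(\pr^{\mu_1}_{\{Z_{n-1}>0\}}\circ \widehat m_n^{-1},\,\pr^{\mu_2}_{\{Z_{n-1}>0\}}\circ \widehat m_n^{-1}\bigr)\,\le\,3\cdot\varepsilon/3\,=\,\varepsilon
$$
for every $n\ge n_0$ and every $\mu_2\in{\cal N}$ with $d_{\scriptsize{\rm TV}}(\mu_1,\mu_2)\le\delta$. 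By Lemma~\ref{proof of hampel-huber generalized - lemma}(i), this yields asymptotic robustness on ${\cal N}$.

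For part~(ii) the argument is identical in structure, but with uniform versions of the auxiliary results: replace Lemma~\ref{continuity of m}(i) by Lemma~\ref{continuity of m}(ii) (to obtain a single $\delta_1$ working for all $\mu_1\in{\cal N}$), and replace Theorem~\ref{uniform conditional weak consistency for Lotka}(i) by Theorem~\ref{uniform conditional weak consistency for Lotka}(ii) (to obtain $n_0$ uniform in $\mu$). Then apply Lemma~\ref{proof of hampel-huber generalized - lemma}(ii) to conclude uniform asymptotic robustness.

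I do not foresee a genuine obstacle here, since the hard analytic content has already been invested into the preceding lemmas. The only thing requiring a moment of care is the elementary Prohorov-triangle computation together with the extraction of the Dirac comparison $\rho(\law(\widehat m_n\mid Z_{n-1}>0),\delta_{m_\mu})\le\max\{\eta,\varepsilon/3\}$ from the conditional weak law of large numbers; this is transparent once $\eta$ and the tolerance $\varepsilon/3$ in Theorem~\ref{uniform conditional weak consistency for Lotka} are chosen consistently.
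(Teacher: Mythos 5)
Your proof is correct and follows essentially the same route as the paper: reduce via Lemma~\ref{proof of hampel-huber generalized - lemma} to the conditional distributions, decompose the Prohorov distance by the triangle inequality through the Dirac masses $\delta_{m_{\mu_1}}$ and $\delta_{m_{\mu_2}}$, control the two Dirac-comparison terms by Theorem~\ref{uniform conditional weak consistency for Lotka}, and control the middle term by Lemma~\ref{continuity of m}. The only cosmetic difference is that the paper writes the triangle decomposition explicitly as a displayed inequality before bounding each summand, whereas you fold the Prohorov--Dirac estimate into a verbal observation; the substance is identical in both parts.
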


\begin{proof}
We first prove part (i). By Lemma \ref{proof of hampel-huber generalized - lemma}(i) it suffices to show that for every $\mu_1\in{\cal N}$ and $\varepsilon>0$ there are some $\delta>0$ and $n_0\in\N$ such that
\begin{eqnarray}
    \lefteqn{\mu_2\in{\cal N},\quad d_{\scriptsize{\rm TV}}(\mu_1,\mu_2)\le\delta}\nonumber\\
    & & \Longrightarrow\quad \rho(\pr_{\{Z_{n-1}>0\}}^{\mu_1}\circ \widehat m_n^{-1},\pr_{\{Z_{n-1}>0\}}^{\mu_2}\circ \widehat m_n^{-1})\le\varepsilon \quad\mbox{for all }n\ge n_0. \label{proof of hampel-huber generalized - eq - 1}
\end{eqnarray}
Fix $\mu_1 \in {\cal N}$ and $\eps > 0$. For every $\mu_2$ we have
\begin{eqnarray}
    \lefteqn{\rho(\pr_{\{Z_{n-1}>0\}}^{\mu_1}\circ \widehat m_n^{-1},\pr_{\{Z_{n-1}>0\}}^{\mu_2}\circ \widehat m_n^{-1})}\nonumber\\[1mm]
    &\le& \rho(\pr_{\{Z_{n-1}>0\}}^{\mu_1}\circ \widehat m_n^{-1},\delta_{m_{\mu_1}})\,+\,|m_{\mu_1}-m_{\mu_2}|\,+\,\rho(\delta_{m_{\mu_2}},\pr_{\{Z_{n-1}>0\}}^{\mu_2}\circ \widehat m_n^{-1}).\label{proof of hampel-huber generalized - eq - 10}
\end{eqnarray}

We start with the first and third summands in this bound. By part (i) of Theorem \ref{uniform conditional weak consistency for Lotka} we can find some $\delta>0$ and $n_0\in\N$ such that for all $n\ge n_0$ and $\mu_2\in{\cal N}$ with $d_{\scriptsize{\rm TV}}(\mu_1,\mu_2)\le\delta$,
\begin{equation*}\label{proof of hampel-huber generalized - eq - 6}
    \pr_{\{Z_{n-1}>0\}}^{\mu_2}\big[|\widehat m_n-m_{\mu_2}|\le\varepsilon/3\big]\,>\,1-\varepsilon/3.
\end{equation*}
Since $\{\widehat{m}_n \in A \} \subset \{ m_{\mu_2} \in A^{\eps/3} \} \cup \{|\widehat{m}_n - m_{\mu_2}| > \eps/3 \}$ for every $A \in \mathcal{B}(\R_{+})$, we obtain for every $A \in \mathcal{B}(\R_{+})$ that
\begin{equation*}
    \pr_{\{Z_{n-1}>0\}}^{\mu_2}\circ \widehat m_n^{-1}[A] \leq  \delta_{m_{\mu_2}}[A^{\eps/3}] + \eps/3,
\end{equation*}
and hence
\begin{equation*}
    \rho(\pr_{\{Z_{n-1}>0\}}^{\mu_2}\circ \widehat m_n^{-1},\delta_{m_{\mu_2}}) \leq \eps/3.
\end{equation*}

For the second summand on the right-hand side of \eqref{proof of hampel-huber generalized - eq - 10} we use the fact that $\mu\mapsto m_\mu$ is $(d_{\scriptsize{\rm TV}},|\cdot|)$-continuous at $\mu_1$, shown in Lemma \ref{continuity of m}(i). Decreasing $\delta>0$ above further if necessary, we obtain
\begin{equation}\label{proof of hampel-huber generalized - eq - 4}
     \mu_2\in{\cal N},\quad d_{\scriptsize{\rm TV}}(\mu_1,\mu_2)\le\delta\quad\Longrightarrow\quad |m_{\mu_1}-m_{\mu_2}|\le\varepsilon/3.
\end{equation}
This completes the proof of part (i).

Part (ii) can be shown analogously. Use parts (ii) instead of (i) of Lemma~\ref{proof of hampel-huber generalized - lemma}, Theorem~\ref{uniform conditional weak consistency for Lotka} and Lemma~\ref{continuity of m}. Note that a finite $\delta>0$ is only needed for the analogue of~\eqref{proof of hampel-huber generalized - eq - 4} (not before).
\end{proof}

\begin{theorem}\label{hampel-huber generalized - finite sample}
(i) The sequence $(\widehat m_n)$ is finite sample robust on ${\cal N}:={\cal N}_1^1$.

(ii) The sequence $(\widehat m_n)$ is uniformly finite sample robust on any uniformly $\psi_1$-inte\-grating set ${\cal N}\subset{\cal N}_1^1$.
\end{theorem}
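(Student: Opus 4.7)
The plan is to reduce the problem to Lemma~\ref{regularity of joint distributions} using the obvious observation that $\widehat m_n$ is a measurable function of $(Z_{n-1},Z_n)$ and hence of $(Z_1,\ldots,Z_n)$. First I would note the standard fact that the Prohorov metric on ${\cal M}_1^+$ is dominated by the total variation metric (directly from the definitions: $P[A]\le Q[A]+d_{\scriptsize{\rm TV}}(P,Q)\le Q[A^{d_{\scriptsize{\rm TV}}(P,Q)}]+d_{\scriptsize{\rm TV}}(P,Q)$), and that total variation distance is contracted under pushforwards. Combining these two facts with Lemma~\ref{regularity of joint distributions} yields
$$
    \rho\bigl(\pr^{\mu_1}\circ\widehat m_n^{-1},\pr^{\mu_2}\circ\widehat m_n^{-1}\bigr)\,\le\,d_{\scriptsize{\rm TV}}^{(n)}\bigl(\pr^{\mu_1}\circ(Z_1,\ldots,Z_n)^{-1},\pr^{\mu_2}\circ(Z_1,\ldots,Z_n)^{-1}\bigr)\,\le\,C_n(\mu_1,\mu_2)\,d_{\scriptsize{\rm TV}}(\mu_1,\mu_2).
$$
After this single bound, both parts reduce to choosing $\delta$ so that the right-hand side is at most $\eps$.

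For part (i), given $\mu_1\in{\cal N}_1^1$, $n\in\N$ and $\eps>0$, I would exploit the minimum in the definition of $C_n$ to get $C_n(\mu_1,\mu_2)\le K_n(\mu_1):=\sum_{i=1}^n m_{\mu_1}^{i-1}<\infty$, a constant independent of $\mu_2$. Taking $\delta:=\eps/K_n(\mu_1)$ then gives (\ref{def quali rob - finite sample}) at once.

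For part (ii), I would invoke Remark~\ref{rem ui char}(ii), which says that any uniformly $\psi_1$-integrating set ${\cal N}$ is mean bounded, so that $\overline m:=\sup_{\mu\in{\cal N}}m_\mu<\infty$. Consequently $C_n(\mu_1,\mu_2)\le K_n:=\sum_{i=1}^n \overline m^{\,i-1}$ uniformly in $\mu_1,\mu_2\in{\cal N}$, and the choice $\delta:=\eps/K_n$ produces the uniform bound required by Definition~\ref{def robustness - asymp finite}(ii). Note in particular that this part does \emph{not} need $\inf_{\mu\in{\cal N}}m_\mu>1$, only the mean boundedness intrinsic to uniform $\psi_1$-integrability.

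There is essentially no substantive obstacle: the heavy lifting has already been done in Lemma~\ref{regularity of joint distributions}, and mean boundedness is automatic from uniform $\psi_1$-integrability. As the outline preceding the theorem suggests, the same conclusion can be repackaged as a coupling argument via Strassen's theorem: since on a discrete space the Prohorov and total variation metrics coincide, $d_{\scriptsize{\rm TV}}^{(n)}(\pr^{\mu_1}\circ(Z_1,\ldots,Z_n)^{-1},\pr^{\mu_2}\circ(Z_1,\ldots,Z_n)^{-1})\le C_n(\mu_1,\mu_2)d_{\scriptsize{\rm TV}}(\mu_1,\mu_2)$ yields coupled versions of the first $n$ generation sizes under $\pr^{\mu_1}$ and $\pr^{\mu_2}$ that coincide with probability at least $1-C_n(\mu_1,\mu_2)\,d_{\scriptsize{\rm TV}}(\mu_1,\mu_2)$, and on this event the two Lotka--Nagaev estimators are equal, which transfers to the Prohorov bound on their laws. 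Both viewpoints give the same constants.
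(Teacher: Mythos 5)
Your proof is correct and takes a genuinely different — and cleaner — route than the paper. The paper's argument runs through Strassen's theorem in both directions: from Lemma~\ref{regularity of joint distributions} it gets a $d_{\scriptsize{\rm TV}}^{(2)}$-bound on the laws of $(Z_{n-1},Z_n)$, invokes (the hard direction of) Strassen to extract a coupling in which the pairs agree with high probability (using that for integer-valued pairs, distance $<1$ forces equality), pushes the coupling forward through the deterministic map $(z_{n-1},z_n)\mapsto z_n/z_{n-1}$, and then invokes the easy direction of Strassen to recover a Prohorov bound on the laws of $\widehat m_n$. You replace this entire detour by two one-line facts — $\rho\le d_{\scriptsize{\rm TV}}$ and contraction of $d_{\scriptsize{\rm TV}}$ under measurable pushforwards — which immediately give $\rho(\pr^{\mu_1}\circ\widehat m_n^{-1},\pr^{\mu_2}\circ\widehat m_n^{-1})\le C_n(\mu_1,\mu_2)\,d_{\scriptsize{\rm TV}}(\mu_1,\mu_2)$; the remaining bookkeeping is the same. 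The constants obtained by the two approaches coincide, and your coupling reformulation at the end makes the connection explicit. Your observation that part~(ii) needs only mean boundedness — automatic from Remark~\ref{rem ui char}(ii) and requiring neither $\inf_{\mu\in{\cal N}}m_\mu>1$ nor the uniform continuity statement of Lemma~\ref{continuity of m}(ii) which the paper cites — is correct and in fact a small tidying of the paper's argument, since uniform continuity of $\mu\mapsto m_\mu$ does not by itself yield its boundedness.
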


\begin{proof}
We start by proving part (i). We have to show that for every $\mu_1\in{\cal N}$, $\varepsilon>0$, and $n\in\N$ there is some $\delta>0$ such that
\begin{equation}\label{proof of hampel-huber generalized - finite sample - eq - 10}
    \mu_2\in{\cal N},\quad d_{\scriptsize{\rm TV}}(\mu_1,\mu_2)\le\delta\quad\Longrightarrow\quad \rho(\pr^{\mu_1}\circ \widehat m_n^{-1},\pr^{\mu_2}\circ \widehat m_n^{-1})\le\varepsilon.
\end{equation}
By the simple direction in Strassen's theorem (e.g. Theorem~2.13 in \cite{HuberRonchetti2009}) the right-hand side in (\ref{proof of hampel-huber generalized - finite sample - eq - 10}) holds if we can find a probability measure $\nu = \nu_{\mu_1,\mu_2}$ on $(\R_+^2,{\cal B}(\R_+^2))$ such that
\begin{equation}\label{proof of hampel-huber generalized - finite sample - eq - 15}
    \nu\circ\pi_i^{-1}\,=\,\pr^{\mu_i}\circ\widehat m_n^{-1},\qquad i=1,2,
\end{equation}
(where $\pi_i:\R^2_+\to\R_+$ is the projection on the $i$th coordinate) and
\begin{equation}\label{proof of hampel-huber generalized - finite sample - eq - 20}
    \nu\big[\big\{(x_1,x_2)\in\R_+^2:\,|x_1-x_2|\le\varepsilon\big\}\big]\,\ge\,1-\varepsilon.
\end{equation}
Thus, for part (i) it suffices to show that for every $\mu_1\in{\cal N}$, $\varepsilon>0$, and $n\in\N$ there is some $\delta>0$ such that for every $\mu_2\in{\cal N}$ with $d_{\scriptsize{\rm TV}}(\mu_1,\mu_2)\le\delta$ one can find a probability measure $\nu$ on $(\R^2_+,{\cal B}(\R^2_+))$ satisfying (\ref{proof of hampel-huber generalized - finite sample - eq - 15})--(\ref{proof of hampel-huber generalized - finite sample - eq - 20}).

Let $\mu_1\in{\cal N}$, $\varepsilon>0$, and $n\in\N$ be fixed. By Lemma \ref{regularity of joint distributions}  we can find some $\delta>0$ such that
$$
    \mu_2\in{\cal N},\quad d_{\scriptsize{\rm TV}}(\mu_1,\mu_2)\le\delta\quad\Longrightarrow\quad d_{\scriptsize{\rm TV}}^{(2)}(\pr^{\mu_1}\circ(Z_{n-1},Z_n)^{-1},\pr^{\mu_2}\circ(Z_{n-1},Z_n)^{-1})\le\varepsilon.
$$
Together with Strassen's theorem this implies that for every $\mu_2\in{\cal N}$ with $d_{\scriptsize{\rm TV}}(\mu_1,\mu_2)\le\delta$ there is some probability measure $\widetilde\nu$ on $(\N_0^2\times\N_0^2,\mathfrak{P}(\N_0^2\times\N_0^2))$ such that
\begin{equation}\label{proof of hampel-huber generalized - finite sample - eq - 50}
    \widetilde\nu\circ\widetilde\pi_i^{-1}\,=\,\pr^{\mu_i}\circ(Z_{n-1},Z_n)^{-1},\qquad i=1,2,
\end{equation}
(where $\widetilde\pi_i:\N_0^2\times\N_0^2\to\N_0^2$ is the projection on the $i$th coordinate) and
\begin{equation}\label{proof of hampel-huber generalized - finite sample - eq - 60}
    \widetilde\nu\Big[\Big\{(z_{n-1}^1,z_n^1;\,z_{n-1}^2,z_n^2)\in\N_0^2\times\N_0^2:\|(z_{n-1}^1,z_n^1)-(z_{n-1}^2,z_n^2)\|\le\varepsilon\Big\}\Big]\,\ge\,1-\varepsilon,
\end{equation}
where $|| \cdot ||$ denotes the standard Euclidean norm.
Now, we set $\widehat{m}_n^{*}(Z_{n-1},Z_{n}):=Z_n/Z_{n-1}$ such that  $\widehat{m}_n = \widehat{m}_n^{*}(Z_{n-1},Z_{n})$, define
\begin{equation}\label{proof of hampel-huber generalized - finite sample - eq - 70}
    \nu\,:=\,\widetilde\nu\circ(\widehat m_n^{*}\circ\widetilde\pi_1,\widehat m_n^{*}\circ\widetilde\pi_2)^{-1}.
\end{equation}
From (\ref{proof of hampel-huber generalized - finite sample - eq - 50}) we obtain for $i=1,2$
\begin{eqnarray*}
    \nu\circ\pi_i^{-1}
    & = & (\widetilde\nu\circ(\widehat m_n^{*}\circ\widetilde\pi_1,\widehat m_n^{*}\circ\widetilde\pi_2)^{-1})\circ\pi_i^{-1}\\
    & = & \widetilde\nu\circ(\pi_i\circ(\widehat m_n^{*}\circ\widetilde\pi_1,\widehat m_n^{*}\circ\widetilde\pi_2))^{-1}\\
    & = & \widetilde\nu\circ(\widehat m_n^{*}\circ\widetilde\pi_i)^{-1}\\
    & = & (\widetilde\nu\circ\widetilde \pi_i^{-1})\circ\widehat m_n^{*}{}^{-1}\\
    & = & (\pr^{\mu_i}\circ(Z_{n-1},Z_n)^{-1})\circ\widehat m_n^{*}{}^{-1}\\
    & = & \pr^{\mu_i}\circ(\widehat m_n^{*}\circ(Z_{n-1},Z_n))^{-1}\\
    & = & \pr^{\mu_i}\circ\widehat m_n^{-1}.
\end{eqnarray*}
That is, (\ref{proof of hampel-huber generalized - finite sample - eq - 15}) holds for $\nu$ defined in (\ref{proof of hampel-huber generalized - finite sample - eq - 70}). Further, if $\|(z_{n-1}^1,z_n^1)-(z_{n-1}^2,z_n^2)\|<1$, then $(z_{n-1}^1,z_n^1)=(z_{n-1}^2,z_n^2)$ and so $\widehat m_n(z_{n-1}^1,z_n^1)=\widehat m_n(z_{n-1}^2,z_n^2)$. Thus, assuming without loss of generality $0<\varepsilon<1$, we obtain
\begin{eqnarray*}
    \lefteqn{\nu\big[\big\{(x_1,x_2)\in\R_+^2:\,|x_1-x_2|>\varepsilon\big\}\big]}\\
    & = & \widetilde\nu\big[\big\{(z_{n-1}^1,z_n^1;\,z_{n-1}^2,z_n^2)\in\N_0^2\times\N_0^2:\,|\widehat m_n^{*}(z_{n-1}^1,z_n^1)-\widehat m_n^{*}(z_{n-1}^2,z_n^2)|>\varepsilon\big\}\big]\\
    & \le & \widetilde\nu\big[\big\{(z_{n-1}^1,z_n^1;\,z_{n-1}^2,z_n^2)\in\N_0^2\times\N_0^2:\,(z_{n-1}^1,z_n^1) \neq (z_{n-1}^2,z_n^2)\big]\\
    & < & \varepsilon,
\end{eqnarray*}
where the last step is ensured by (\ref{proof of hampel-huber generalized - finite sample - eq - 60}). That is, we also have (\ref{proof of hampel-huber generalized - finite sample - eq - 20}) for $\nu$ defined in (\ref{proof of hampel-huber generalized - finite sample - eq - 70}). This completes the proof of part (i).

Part (ii) can be shown analogously. Take into account that, under the stronger assumption on ${\cal N}$, Lemma \ref{regularity of joint distributions} and part (ii) of Lemma \ref{continuity of m} imply that the mapping ${\cal N}_1^1\to ({\cal N}_1^{1})^{2}$, $\mu\mapsto \pr^\mu\circ(Z_{n-1},Z_n)^{-1}$ is {\em uniformly} $(d_{\scriptsize{\rm TV}},d_{\scriptsize{\rm TV}}^{(2)})$-continuous.
\end{proof}

\begin{theorem}\label{hampel-huber converse}
Let ${\cal N}\subset{\cal N}_1^1$ such that $m_\mu>1$ for all $\mu\in{\cal N}$, and assume that there exists some $\mu_1\in{\cal N}$ such that the mapping ${\cal N}\ni\mu\mapsto m_\mu$ is not $(d_{\scriptsize{\rm TV}},|\cdot|)$-continuous at~$\mu_1$. Then the sequence $(\widehat m_n)$ is not asymptotically robust on ${\cal N}$.
\end{theorem}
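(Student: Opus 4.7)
The plan is to argue by contradiction, exploiting the classical strong consistency of the Lotka--Nagaev estimator under each individual offspring law with mean greater than one. Assume $(\widehat{m}_n)$ is asymptotically robust on ${\cal N}$. The hypothesis on $\mu_1$ provides some $\eta_0 \in (0,1)$ and a sequence $(\mu_k)_{k \in \N} \subset {\cal N}$ with $d_{\scriptsize{\rm TV}}(\mu_1,\mu_k) \to 0$ and $|m_{\mu_1} - m_{\mu_k}| \ge \eta_0$ for every $k$. For any prescribed $\varepsilon_0 > 0$ (small, to be fixed at the end), asymptotic robustness at $\mu_1$ then yields $\delta > 0$ and $n_0 \in \N$ such that $\rho(\pr^{\mu_1}\circ\widehat{m}_n^{-1}, \pr^{\mu_k}\circ\widehat{m}_n^{-1}) \le \varepsilon_0$ for every $n \ge n_0$ and every $k$ with $d_{\scriptsize{\rm TV}}(\mu_1,\mu_k) \le \delta$.

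For each fixed $\mu \in {\cal N}$, $\widehat{m}_n \to m_\mu$ almost surely on non-extinction (Heyde's argument), while $\widehat{m}_n = 0$ eventually on extinction, since then $Z_{n-1} = 0$. Hence $\pr^\mu \circ \widehat{m}_n^{-1}$ converges weakly, and therefore also in the Prohorov metric, to the two-point limit $\nu_\mu := q_\mu \delta_0 + (1-q_\mu) \delta_{m_\mu}$. Applying this separately for $\mu = \mu_1$ and $\mu = \mu_k$ (with $k$ fixed and large enough that $d_{\scriptsize{\rm TV}}(\mu_1,\mu_k) \le \delta$), the triangle inequality applied to $\rho$ produces some $n \ge n_0$ with
\begin{equation*}
\rho(\nu_{\mu_1}, \nu_{\mu_k}) \,\le\, \rho(\nu_{\mu_1}, \pr^{\mu_1}\circ\widehat{m}_n^{-1}) + \rho(\pr^{\mu_1}\circ\widehat{m}_n^{-1}, \pr^{\mu_k}\circ\widehat{m}_n^{-1}) + \rho(\pr^{\mu_k}\circ\widehat{m}_n^{-1}, \nu_{\mu_k}) \,\le\, 3\varepsilon_0 .
\end{equation*}

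The contradiction now comes from bounding $\rho(\nu_{\mu_1}, \nu_{\mu_k})$ from below by a positive constant independent of $k$. Lemma \ref{proof of asymptotic robustness of lotka - lemma 1}(i), taken in the limit $n \to \infty$, forces $q_{\mu_k} \to q_{\mu_1}$ as $k \to \infty$, so for all $k$ large enough we have $1 - q_{\mu_k} \ge p := (1-q_{\mu_1})/2 > 0$. Testing the defining Prohorov inequality on $A_k := [m_{\mu_k} - \eta_0/2, m_{\mu_k} + \eta_0/2] \cap \R_+$, the separation $|m_{\mu_1} - m_{\mu_k}| \ge \eta_0$ together with $\min(m_{\mu_1}, m_{\mu_k}) > 1 > \eta_0$ guarantees that $A_k^{\varepsilon'}$ contains neither $0$ nor $m_{\mu_1}$ for every $\varepsilon' < \eta_0/2$, whence $\nu_{\mu_1}[A_k^{\varepsilon'}] = 0$ while $\nu_{\mu_k}[A_k] \ge 1 - q_{\mu_k} \ge p$. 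This gives $\rho(\nu_{\mu_k}, \nu_{\mu_1}) \ge \min(p, \eta_0/2)$, and choosing $\varepsilon_0 < \min(p, \eta_0/2)/3$ at the outset yields the desired contradiction. The main obstacle I anticipate is that no uniform integrability is assumed on ${\cal N}$, so Lemma \ref{uniformextinction} is unavailable for bounding $1-q_{\mu_k}$ away from $0$; however, Lemma \ref{proof of asymptotic robustness of lotka - lemma 1}(i) holds on all of ${\cal N}_1^1$ with $m_{\mu_1} > 1$ and supplies exactly the continuity of $\mu \mapsto q_\mu$ at $\mu_1$ that the argument requires.
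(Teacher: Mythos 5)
Your argument is correct, and it follows a genuinely different route from the paper's.

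The paper's proof stays entirely inside the conditional framework: it writes $\min\{1;|m_{\mu_1}-m_{\mu_2}|\}=\rho(\delta_{m_{\mu_1}},\delta_{m_{\mu_2}})$, splits this by a triangle inequality into $\rho\bigl(\pr_{\{Z_{n-1}>0\}}^{\mu_1}\circ\widehat m_n^{-1},\pr_{\{Z_{n-1}>0\}}^{\mu_2}\circ\widehat m_n^{-1}\bigr)$ plus the two consistency terms $\rho\bigl(\delta_{m_{\mu_i}},\pr_{\{Z_{n-1}>0\}}^{\mu_i}\circ\widehat m_n^{-1}\bigr)$, kills the latter by Theorem~\ref{uniform conditional weak consistency for Lotka}(ii) applied to the singleton $\{\mu_i\}$, and converts assumed asymptotic robustness into smallness of the first term via Lemma~\ref{proof of hampel-huber generalized - lemma}(i). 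The conclusion is that $\mu\mapsto m_\mu$ is weakly continuous at $\mu_1$, a direct contradiction. You instead remain with the unconditioned laws $\pr^\mu\circ\widehat m_n^{-1}$ and pass to their weak limits $\nu_\mu=q_\mu\delta_0+(1-q_\mu)\delta_{m_\mu}$ (via Heyde's strong consistency on non-extinction plus $\widehat m_n=0$ eventually on extinction), bound $\rho(\nu_{\mu_1},\nu_{\mu_k})\le 3\varepsilon_0$ by a triangle inequality, extract $q_{\mu_k}\to q_{\mu_1}$ by letting $n\to\infty$ in Lemma~\ref{proof of asymptotic robustness of lotka - lemma 1}(i), and produce an explicit lower bound $\rho(\nu_{\mu_1},\nu_{\mu_k})\ge\min\{p,\eta_0/2\}$ by testing the Prohorov inequality on a small interval around $m_{\mu_k}$. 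Both arguments are sound; yours is more concrete (it exhibits the Borel set on which the two limit laws must disagree and makes the convergence $q_{\mu_k}\to q_{\mu_1}$ explicit), at the cost of invoking Heyde's a.s.\ consistency, which the paper cites but does not reprove. You could make the proof self-contained in the paper's own terms by deriving the weak convergence $\pr^\mu\circ\widehat m_n^{-1}\to\nu_\mu$ from Theorem~\ref{uniform conditional weak consistency for Lotka}(ii) with ${\cal N}=\{\mu\}$ combined with $\pr^\mu[Z_{n-1}=0]\to q_\mu$, which is precisely the mechanism the paper's proof exploits via Lemma~\ref{proof of hampel-huber generalized - lemma}. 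The paper's route is more streamlined because it reuses that lemma wholesale and never needs to identify the limit measure or control $q_{\mu_k}$ explicitly.
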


\begin{proof}
Suppose that the sequence $(\widehat m_n)$ is asymptotically robust on ${\cal N}$. In view of the identity $\min\{1;|m_{\mu_1}-m_{\mu_2}|\}=\rho(\delta_{m_{\mu_1}},\delta_{m_{\mu_2}})$, we have for every $\mu_2\in{\cal N}$,
\begin{eqnarray*}
    \lefteqn{\min\{1;|m_{\mu_1}-m_{\mu_2}|\}}\\
    & \le & \rho\big(\pr_{\{Z_{n-1}>0\}}^{\mu_1}\circ \widehat m_n^{-1},\,\pr_{\{Z_{n-1}>0\}}^{\mu_2}\circ \widehat m_n^{-1}\big)\,+\,\sum_{i=1}^2\rho(\delta_{m_{\mu_i}},\,\pr_{\{Z_{n-1}>0\}}^{\mu_i}\circ \widehat m_n^{-1})\\
    & =: & S_0(n,\mu_1,\mu_2)\,+\,\sum_{i=1}^2S_{i}(n,\mu_i).
\end{eqnarray*}
Let $\varepsilon>0$ be fixed. Recall that $\rho$ metrizes the weak topology. Thus, using Theorem~\ref{uniform conditional weak consistency for Lotka}(ii) with ${\cal N} = \{\mu_i\}$, we can find some $n_1\in\N$ such that
$$
    \sum_{i=1}^2S_{i}(n,\mu_i)\,\le\,\varepsilon/2\quad\mbox{ for all }n\ge n_1.
$$
By the asymptotic robustness of $(\widehat m_n)$ and part (i) of Lemma \ref{proof of hampel-huber generalized - lemma}, we can also find some $\delta>0$ and $n_0\ge n_1$ such that
$$
    \mu_2\in{\cal N},\quad d_{\scriptsize{\rm TV}}(\mu_1,\mu_2)\le\delta\quad\Longrightarrow\quad S_0(n,\mu_1,\mu_2)\,\le\,\varepsilon/2\quad\mbox{ for all }n\ge n_0.
$$
Thus, the mapping $\mu\mapsto m_{\mu}$ is $(d_{\scriptsize{\rm TV}},|\cdot|)$-continuous at $\mu_1$. This contradicts the assumption.
\end{proof}


\section{Extension to general initial states}\label{Extension to general initial states}

In this section, we outline modifications in the arguments that show that our main result, Theorem \ref{asymptotic robustness of lotka}, is true when we start the process with a population of general size $z_0.$ Note that in this case, we can decompose the process $(Z_n)$ into $z_0$ independent processes
$(Z_n^{(i)})$ started with $1$ individual for $i=1,\dots, z_0$ such that
\begin{equation}
\label{general z0 decomposition}
Z_n= Z_n^{(1)}+ \dots + Z_n^{(z_0)}.
\end{equation}
In order to avoid confusion we write $\pr^{z_0,\mu}$ for the probability measure under which $(Z_n)$ started in $z_0$ with offspring distribution $\mu$ evolves.
Denoting by $q_\mu^{(z_0)}$ the extinction probability of $(Z_n),$ it is immediate that $q_{\mu}^{(z_0)}=q_{\mu}^{z_0}\leq q_{\mu}$ for all $z_0 \in \N.$
We will show that Theorems \ref{hampel-huber generalized} and  \ref{hampel-huber generalized - finite sample} hold also for $(Z_n)$ started in a general $z_0$ such that Theorem \ref{asymptotic robustness of lotka} follows.

Theorem \ref{hampel-huber generalized} uses Lemma \ref{proof of hampel-huber generalized - lemma} whose proof works in the same way as before: We simply have to note that Lemma \ref{proof of asymptotic robustness of lotka - lemma 1} still holds due to the inequality
\begin{eqnarray*}
|\pr^{z_0,\mu_1}[Z_{n}=0]-\pr^{z_0,\mu_2}[Z_{n}=0]|&=& |\pr^{\mu_1}[Z_{n}=0]^{z_0}-\pr^{\mu_2}[Z_{n}=0]^{z_0}|\\
&\leq& z_0  |\pr^{\mu_1}[Z_{n}=0]-\pr^{\mu_2}[Z_{n}=0]|
\end{eqnarray*}
and replace $q_{\mu_i}$ by  $q_{\mu_i}^{z_0}$ for $i=1,2$ in the argument.

The other result that is needed in Theorem \ref{hampel-huber generalized} is Theorem \ref{uniform conditional weak consistency for Lotka}. The proof of the latter still applies as long as Lemma \ref{unifgrowth} holds. The modifications here are the following: According to (\ref{from alive at n to overall survival}), replacing $q_{\mu_2}$ by $q_{\mu_2}^{z_0}$ we need to bound  $\pr^{z_0,\mu_2}[ Z_n =k| A]$ and $\pr^{z_0,\mu_2}[ B | Z_n>0]$ where $A$ is the event of survival and $B$ that of extinction of $(Z_n)$. For the former we use that for all $z_0 \in \N$,
\begin{equation*}
    \pr^{z_0,\mu_2}[ Z_n =k| A]\, \leq\, \pr^{z_0,\mu_2}[ Z_n \leq k| A]\, \leq\, \pr^{\mu_2}[ Z_n \leq k| A]
\end{equation*}
in (\ref{binomial bound}).
Replacing again $q_{\mu_2}$ by $q_{\mu_2}^{z_0}$ we see by Bayes Formula that  for the latter it suffices to consider
\begin{equation*}
    \pr^{z_0,\mu_2}[Z_n>0 | B ]\, \leq\, \sum_{i=1}^{z_0} \pr^{z_0,\mu_2}[Z_n^{(i)}>0 | B ]\, \leq\, z_0 \pr^{\mu_2}[ Z_n^{(i)} >0| B_i],
\end{equation*}
where $B_i$ denotes the event of extinction of $Z_n^{(i)}.$ The last probability is bounded appropriately in the proof of Lemma \ref{unifgrowth}.

Having established the validity of Theorem \ref{hampel-huber generalized} we turn to Theorem \ref{hampel-huber generalized - finite sample}. Here, the essential ingredient is the analogous version of Lemma \ref{regularity of joint distributions}. However, it is easy to see that
(\ref{regularity of joint distributions - eq}) holds with $C_n(\mu_1,\mu_2)$ replaced by $z_0 C_n(\mu_1,\mu_2)$: Namely, note that
due to (\ref{general z0 decomposition})
\begin{eqnarray*}
 & & d_{\scriptsize{\rm TV}}^{(n)}\big(\pr^{z_0,\mu_1}\circ(Z_1,\ldots,Z_n)^{-1},\pr^{z_0,\mu_2}\circ(Z_1,\ldots,Z_n)^{-1}\big)\\
 &=& \frac{1}{2} 
\sum\big(\pr^{z_0,\mu_1}[Z_1^{(1)}=z_1^{(1)},\ldots,Z_n^{(1)}=z_n^{(1)}, \ldots, Z_1^{(z_0)}=z_1^{(z_0)},\ldots, Z_n^{(z_0)}=z_n^{(z_0)}]\\
 & & \phantom{AAAA}
 -  \pr^{z_0,\mu_2}[Z_1^{(1)}=z_1^{(1)},\ldots,Z_n^{(1)}=z_n^{(1)}, \ldots, Z_1^{(z_0)}=z_1^{(z_0)},\ldots, Z_n^{(z_0)}=z_n^{(z_0)}]  \big),
\end{eqnarray*}
where the sum ranges over all $z_1^{(1)},\ldots,z_n^{(1)},\cdots,z_1^{(z_0)}, \dots, z_n^{(z_0)} \in \N_0$. But due to the independence of $(Z^{(1)}_n)$ to   $(Z^{(z_0)}_n)$ we have
\begin{eqnarray*}
& &\pr^{z_0,\mu_1}[Z_1^{(1)}=z_1^{(1)},\ldots,Z_n^{(1)}=z_n^{(1)}, \ldots, Z_1^{(z_0)}=z_1^{(z_0)},\ldots, Z_n^{(z_0)}=z_n^{(z_0)}]\\
&=& \prod_{i=1}^{z_0} \pr^{\mu_1}[Z_1^{(1)}=z_1^{(1)},\ldots,Z_n^{(1)}=z_n^{(1)}]
\end{eqnarray*}
so that it follows with (\ref{regularity of joint distributions - proof - 2}) that
\begin{eqnarray*}
 & &d_{\scriptsize{\rm TV}}^{(n)}\big(\pr^{z_0,\mu_1}\circ(Z_1,\ldots,Z_n)^{-1},\pr^{z_0,\mu_2}\circ(Z_1,\ldots,Z_n)^{-1}\big)\\
&\leq& z_0 d_{\scriptsize{\rm TV}}^{(n)}\big(\pr^{\mu_1}\circ(Z_1,\ldots,Z_n)^{-1},\pr^{\mu_2}\circ(Z_1,\ldots,Z_n)^{-1}\big).
\end{eqnarray*}
The conclusion follows now with the original version of Lemma \ref{regularity of joint distributions}. This completes the proof of Theorem \ref{hampel-huber generalized - finite sample} and thus also of Theorem \ref{asymptotic robustness of lotka}.

%



\end{document}